\title[Tropical Fr\'echet Means]{Tropical Fr\'echet Means: \\ a polyhedral approach to exact optimization}
\author{Kamillo Ferry}
\address{Kamillo Ferry, Technical University Berlin, Germany}
\email[Corresponding author]{ferry@math.tu-berlin.de}
\author{Bo Lin}
\address{Bo Lin, Georgia Institute of Technology, USA}
\author{Carlos Am\'endola}
\address{Carlos Am\'endola, Technical University Berlin, Germany}
\author{Anthea Monod}
\address{Anthea Monod, Imperial College London, UK}
\author{Ruriko Yoshida}
\address{Ruriko Yoshida, Naval Postgraduate School, USA}
\keywords{Fréchet means, tropical geometry, quadratic optimization, braid arrangement, algebraic statistics}
\subjclass[2020]{14T90, 62R01, 62R20, 90C24}
\newcommand{\xxxtodo}[1]{}
\pgfplotsset{compat=1.18} 
\newtheorem{theorem}{Theorem}
\newtheorem*{theorem*}{Theorem}
\newtheorem{corollary}[theorem]{Corollary}
\newtheorem{lemma}[theorem]{Lemma}
\newtheorem{proposition}[theorem]{Proposition}
\newtheorem*{claim*}{Claim}
\theoremstyle{definition}
\newtheorem{definition}[theorem]{Definition}
\newtheorem*{definition*}{Definition}
\theoremstyle{remark}
\newtheorem{remark}[theorem]{Remark}
\newtheorem*{example*}{Example}
\newtheorem{XxmpX}[theorem]{Example} % \newtheorem establishes the object heading
\newenvironment{example}    % this is the environment name for the input
  {%
   \pushQED{\qed}\begin{XxmpX}}
  {\popQED\end{XxmpX}}
\def\beginmat{ \left( \begin{array} }
\def\endmat{ \end{array} \right) }
\newcommand*{\op}{%
  \DOTSB
  \mathop{\vphantom{\bigoplus}\mathpalette\matt@op\relax}%
  \slimits@
}
\newcommand\matt@op[2]{%
  \vcenter{\m@th\hbox{\resizebox{\widthof{$#1\bigoplus$}}{!}{$\boxplus$}}}%
}
\newcommand{\one}{{\bf{1}}}
\def\R{{\mathbb R}}
\def\TT{{\mathbb T}}
\newcommand{\torus}[1][n]{\mathbb{R}^{#1}/\mathbb{R}\mathbf{1}}
\newcommand{\algO}{\ensuremath{\mathcal{O}}}
\newcommand{\bigO}{\algO}
\def\conv{{\mathrm{conv}}}
\def\tconv{{\mathrm{tconv}}}
\def\Q{{\mathrm{Q}}}
\newcommand{\Sym}{\mathbb{S}}
\newcommand{\dtr}{d_{\mathrm{tr}}}
\newcommand{\tfm}[1][x]{\ensuremath{\widetilde{#1}}}
\newcommand{\TropBall}[2]{\ensuremath{\mathrm{B_{tr}}(#1, #2)}}
\newcommand{\braidarr}[1][n]{\ensuremath{\mathcal{B}_{#1}}}
\newcommand{\objectivefunction}[1]{\gdef\@objectivefunction{#1}}% Store problem title
\newcommand{\variables}[1]{\gdef\@variables{#1}}% Store problem input
\newcommand{\constraints}[1]{\gdef\@constraints{#1}}% Store problem question
  \par\addvspace{.5\baselineskip}
  \par\addvspace{.5\baselineskip}
\def\@biblabel#1{}
\begin{document}
\begin{abstract}

The Fr\'{e}chet mean is a fundamental notion of central tendency defined as a minimizer of a sum of squared distances in a general metric space.  In this paper, we study Fr\'{e}chet means in tropical geometry---a piecewise linear, combinatorial, and polyhedral variant of algebraic geometry---by formulating and solving the associated tropical quadratic optimization problem.  We give a geometric characterization of the collection of all tropical Fr\'{e}chet means as a bounded set that is simultaneously tropically and classically convex, hence a polytrope.  We establish the existence of positivity certificates for maxima of finitely many quadratic polynomials in $\mathbb{R}[x_1,\ldots,x_n]$ whose homogeneous quadratic components are sums of squares, which provides a symbolic framework for exact optimization.  Using this structure, we develop algorithms for computing tropical Fr\'{e}chet means and the associated Fr\'{e}chet mean polytrope.  We further describe a combinatorial type decomposition of the objective function induced by braid arrangements, yielding a piecewise quadratic representation and a fully symbolic method for exact computation.

\end{abstract}

\onehalfspace
\maketitle

%%%%%%%%%%%%%%%%%%%%%%%%%%%%%%%%%%%%%%%%%%%%%%%%%%%

\section{Introduction}
\label{sec:intro}

The Fr\'{e}chet mean is a classical notion of centrality defined as a minimizer of a sum of squared distances in a metric space.  Unlike in Euclidean spaces, where the mean admits a closed-form expression, Fréchet means in general metric spaces are defined implicitly as solutions to an optimization problem and they need not be unique.  This lack of smoothness and uniqueness makes both theoretical analysis and exact computation challenging.

In this paper, we study Fr\'{e}chet means in the tropical projective torus endowed with the tropical metric.  Tropical geometry replaces classical algebraic operations by max-plus arithmetic, which leads to a piecewise linear and polyhedral geometric setting.  Tropical formulations arise naturally in applications such as phylogenetics and algebraic statistics \cite{Maclagan.Sturmfels, MLYK:2022}; from a computational perspective they define a class of non-smooth optimization problems whose structure is fundamentally combinatorial.

A central difficulty is that the tropical Fr\'{e}chet objective function is a sum of squared tropical distances, each of which is itself a maximum of finitely many quadratic functions.  As a result, the global objective is a piecewise quadratic function defined by an exponential number of regions, and standard smooth optimization techniques are inapplicable.  Moreover, classical algorithms for Fr\'{e}chet means, such as Sturm’s algorithm \cite{Sturm:2003}, are known to fail in tropical settings due to the complex curvature properties of the tropical projective torus \cite{Amendola.Monod:2021}.

Our contributions are twofold: First, we provide a geometric characterization by showing that the set of all tropical Fr\'{e}chet means of a finite sample forms a bounded polytrope, i.e., a subset of tropical projective space that is both tropically and classically convex.  This implies that the set of Fr\'echet means admits both a tropical and a classical polyhedral description. We also show how once a single Fr\'{e}chet mean is known, one can compute all Fr\'{e}chet means. 

Our second contribution is exact symbolic computation of the tropical Fr\'{e}chet mean: we develop a symbolic framework for solving the tropical Fr\'{e}chet optimization problem without relying on numerical approximation.  More specifically, we first establish a general positivity certificate for maxima of quadratic polynomials with nonnegative homogeneous components.  This result guarantees the existence of exact symbolic lower bounds for the Fr\'{e}chet objective and provides a certificate-based method for verifying optimality.  Second, we introduce a type decomposition of the tropical Fr\'{e}chet objective function using braid arrangements.  We show that the objective function admits a finite piecewise quadratic structure indexed by the chambers of a braid arrangement associated with the input data.  Within each chamber, the objective reduces to an explicit sum of squares, and its critical locus is given by the solution of a linear system.  This leads to a purely symbolic algorithm that enumerates feasible covectors, computes affine critical loci, and intersects them with combinatorially defined regions in order to recover the full Fr\'{e}chet mean polytrope.  From the perspective of symbolic computation, this provides a concrete example of a non-smooth metric optimization problem that can be solved exactly using polyhedral and combinatorial techniques.  The resulting algorithms operate entirely in exact arithmetic and rely only on linear algebra, hyperplane arrangements, and convex polyhedral computations.

The paper is organized as follows.  We close this introduction with an overview of related work.  In Section 2, we review tropical metric geometry and tropical convexity.  In Section 3, we define tropical Fr\'{e}chet means and characterize their solution set as a polytrope.  Section 4 develops exact quadratic optimization methods and positivity certificates. Section 5 introduces the braid arrangement and type decomposition of the objective function and presents a symbolic algorithm for computing Fr\'{e}chet mean polytropes.  Section 6 provides numerical experiments.  We close the paper in Section 7 with a discussion on directions for future research.

\subsection{Related Work}

A similar statistical quantity of centrality that is defined by a similar optimization problem is the \emph{Fermat--Weber point}, which is a generalization of the median to general metric spaces.  These have been introduced in tropical settings identical to those of our work by \citeauthor{Lin.Yoshida:2018}~\cite{Lin.Yoshida:2018}  
 who also study their uniqueness properties.

\citeauthor{SBYM:2024}~\cite{SBYM:2024} studied the set of (symmetric) tropical Fermat-Weber points 
arriving at the conclusion that this set is a polytrope. This mirrors our results about
the set of tropical Fréchet means.

\citeauthor{MR4728892} give bounds on the dimension of the set of asymmetric
tropical Fermat--Weber points and propose an approach to its computation using
optimal transport \cite{MR4728892}. 
Again, they observed that the set of asymmetric tropical Fermat--Weber points
for a given sample forms a polytrope.

From an algorithmic perspective, tropical optimization problems have also been studied using descent-based methods.  In particular, \citeauthor{talbut2025tropical}~\cite{talbut2025tropical} introduced a general framework for \emph{tropical gradient descent}, providing a numerical optimization method for piecewise linear and piecewise smooth tropical objectives.  While their approach is designed for a wide and general class of statistical objectives on the tropical projective torus, this paper focuses on exact symbolic computation and exploits the full polyhedral and combinatorial structure of the tropical Fr\'{e}chet objective.

Both tropical Fr\'echet means and tropical Fermat--Weber points describe location problems of 
\emph{tropically quasiconvex functions} proposed and studied by \citeauthor{Comǎneci:2024}~\cite{Comǎneci:2024}, who further shows that this class of location problems enjoy particularly nice properties for
use as consensus methods in phylogenetics.

%%%%%%%%%%%%%%%%%%%%%%%%%%%%%%%%%%%%%%%%%%%%%%%%%%%

\section{Tropical Metric Spaces}

In this section, we present the space in which we work---the tropical projective torus---and the metric with which we endow this space---the tropical metric.  We also review key properties of this space that will be important for our study of tropical Fréchet means.

\subsection{The Tropical Projective Torus and the Tropical Metric}

Denote by \(\TT = \R\cup\{-\infty\}\) the (max-)tropical numbers with operations \(a\oplus b\coloneqq \max\{\, a,b \,\}\)
and \(a\odot b \coloneqq a+b\). For vectors $x, y \in \TT^n$ and $\lambda \in \R$, we define the tropical operations as follows:
$$x \oplus y = ( \max\{x_1,y_1\}, \dots, \max\{ x_n,y_n\}), \quad \lambda \odot x = (\lambda + x_1,\dots,\lambda + x_n).$$

Usual metrics in $\R^n$ do not behave well tropically, so we consider instead the following space to define an appropriate tropical distance. Define the equivalence relation in $\R^n$:
$$
x \sim y \Longleftrightarrow \mbox{~all coordinates of~} (x-y) \mbox{~are equal.} 
$$

The {\em tropical projective torus} $\R^n/\R\one$ is the quotient space given by the set of equivalence classes under $\sim$; 
in other words, it is the space constructed by identifying vectors that differ from each other by tropical scalar multiplication. 
Denote the equivalence class of $x$ by $[x] \in \R^{n}/\R\one$. 

The tropical projective torus is a metric space when endowed with the following distance function.

\begin{definition}
\label{def:tropicalmetric}
Let $[x], [y] \in \R^{n}/\R\one$. We define the \emph{tropical metric} on $\R^n/\R\one$ as
\begin{align*}
\dtr([x],[y]) :=& \max_{1\leq j < i \leq {n}}\big\lvert(x_i-x_j) - (y_i - y_j)\big|\\
 =& \max_{1 \leq i \leq n} (x_i - y_i) - \min_{1 \leq i \leq n} (x_i - y_i).
\end{align*}
\end{definition}

\begin{example}
    \(\dtr((4,0,9),(0,-1,5)) = \max \{4,1,4\}- \min \{4,1,4\} = 4-1 = 3.\)
\end{example}

\begin{proposition}[{\cite[Prop.\ 5.\,19]{Joswig:ETC}}]
The tropical metric is a well-defined metric.
\end{proposition}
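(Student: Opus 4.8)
The plan is to verify the standard metric axioms directly from the formula in Definition~\ref{def:tropicalmetric}, using the observation that $\dtr([x],[y])$ depends on $x$ and $y$ only through the coordinate spread $\max_i(x_i-y_i) - \min_i(x_i-y_i)$ of the difference vector $x-y$.

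First I would check well-definedness. If $x' \sim x$ and $y' \sim y$, then $x' = x + a\one$ and $y' = y + b\one$ for some $a,b \in \R$, and hence each difference $(x'_i - y'_i) - (x'_{i'} - y'_{i'})$ equals $(x_i - y_i) - (x_{i'} - y_{i'})$; so the maximum over pairs $i < i'$ is unchanged and $\dtr$ descends to $\R^n/\R\one$. The equivalence of the two displayed expressions in the definition is the elementary identity $\max_{i < i'} |a_i - a_{i'}| = \max_i a_i - \min_i a_i$, applied to $a = x - y$.

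Non-negativity and symmetry are then immediate: the first expression is a maximum of absolute values, hence $\geq 0$, and it is invariant under interchanging $x$ and $y$ because $|(x_i - y_i) - (x_{i'} - y_{i'})| = |(y_i - x_i) - (y_{i'} - x_{i'})|$. For the identity of indiscernibles, $\dtr([x],[y]) = 0$ forces $\max_i(x_i - y_i) = \min_i(x_i - y_i)$, i.e.\ all coordinates of $x - y$ agree, which is precisely $x \sim y$, that is $[x] = [y]$; the reverse implication is clear.

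The only step that requires an argument is the triangle inequality, which I expect to be the main (though mild) obstacle. Setting $u = x - y$ and $v = y - z$, so that $x - z = u + v$, I would invoke subadditivity of the maximum and superadditivity of the minimum:
\begin{align*}
\dtr([x],[z]) &= \max_i (u_i + v_i) - \min_i (u_i + v_i) \\
&\leq \big(\max_i u_i + \max_i v_i\big) - \big(\min_i u_i + \min_i v_i\big) \\
&= \big(\max_i u_i - \min_i u_i\big) + \big(\max_i v_i - \min_i v_i\big) \\
&= \dtr([x],[y]) + \dtr([y],[z]).
\end{align*}
Combining these verifications shows that $\dtr$ is a well-defined metric on $\R^n/\R\one$.
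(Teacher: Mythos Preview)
Your verification is correct and complete: well-definedness on the quotient, the equivalence of the two displayed formulas, non-negativity, symmetry, the identity of indiscernibles, and the triangle inequality via subadditivity of $\max$ and superadditivity of $\min$ all go through exactly as you wrote them.

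Note, however, that the paper does not supply its own proof of this proposition at all; it simply records the statement and cites \cite[Prop.~5.19]{Joswig:ETC} for the argument. So there is no in-paper proof to compare against---your direct axiom-by-axiom check is the standard one and is precisely what the cited reference carries out.
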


The metric space on which we seek to study and solve the Fréchet mean optimization problem is then the tropical projective torus equipped with the tropical metric: $(\R^{n}/\R\one, \dtr)$.

\subsection{Tropical Convexity}

Convexity is a key geometric property that has implications on the definition and solution to Fréchet means.  It is a property that also makes sense in $(\R^{n}/\R\one, \dtr)$.  We now overview some known results and present some key concepts associated with tropical convexity that will be important in our work.

Recall that a \emph{(classical) polytope} \(P\subset\R^n\) is defined as the set of convex combinations
of a finite set of points \(\left\{\, a_1,\dots,a_m \,\right\}\subset\R^n\):
\[
    P = \conv(a_1,\dots,a_m) \coloneqq \left\{\,
        \sum_{j=1}^m \lambda_j a_j 
        \mathop{\,\bigg\vert\,}
        \lambda_j\geq 0,\ \sum_{j=1}^m \lambda_j = 1
    \,\right\}.
\]
A classical polytope can be seen as the image of a standard \(m\)-simplex \(\Delta^m\) under the linear map
\({\Delta^m\to\R^n}\) given by the multiplication with a matrix \(A\in\R^{m\times n}\). 

Similarly, a \emph{tropical polytope} \(\tconv(A)\) is the image of a
linear map \(\torus[m]\to\torus\) given by tropical multiplication with a matrix \(A\in\TT^{m\times n}\).
The tropical polytope \(\tconv(A)\) is explicitly given by the set 
\[
    \tconv(a_1,\dots,a_m) \coloneqq \left\{\,
        \bigoplus_{j=1}^m \lambda_j a_j 
        \mathop{\,\bigg\vert\,}
        \lambda_j\in\mathbb{R}
    \,\right\}.
\]
By slight abuse of notation, we denote both the matrix and the set of its columns 
\(\left\{a_1,\dots,a_n\right\}\) by \(A\). 

The identification \(\torus \cong \R^{n-1}\) allows us to compare tropical polytopes with classical polytopes,
but it turns out that a tropical polytope is, in general, not classically convex; see \Cref{fig:Skinny}. This motivates the definition of a \emph{polytrope}, which is a tropical polytope that is also a classical polytope \cite{JK:2010}.

Being both tropically and classically convex reveals much about the structure of a polytope. 
For example, for a given polytrope \(P\subseteq\torus\), \citeauthor{DS:2004} \cite{DS:2004} show that there exists a tropical square matrix \(C\in\TT^{n\times n}\) such that \(P\) is given by \[
    P = \Q(C) \coloneqq \left\{\, 
        x\in\torus 
        \mid
        x_i - x_j \geq c_{ij}
    \,\right\}.
\] In fact, it has been shown that above matrix \(C\) satisfies the relation \[
    \tconv(C^*) = \Q(C^*) = \Q(C)
\] where \(C^* = I_n\oplus C \oplus C^{\odot2} \oplus \dots \oplus C^{\odot (n-1)}\) is the \emph{Kleene star} of \(C\) \cite{deLaPuente:2013} and the identity matrix for the tropical matrix product is given by \[
    I_n = \begin{pmatrix}
     0 & -\infty & \dots & -\infty \\
     -\infty & 0 & \ddots & \vdots \\
     \vdots & \ddots & \ddots & -\infty \\
     -\infty & \dots & -\infty & 0
    \end{pmatrix}.
\]

\begin{example}\label{ex:polytrope}
    \begin{figure}
        \centering
        \includestandalone[width=0.6\linewidth]{figures/wdp}
        \caption{The polytrope from \Cref{ex:polytrope}. The three black dots are the tropical vertices while there
        are two additional pseudovertices}
        \label{fig:polytrope}
    \end{figure}
    
    Consider the matrix \[
        C = \begin{pmatrix}
            -1 & 1 & -5\\
            -4 & 0 & -\infty\\
             0 & 3 & -\infty
        \end{pmatrix}
    \ \text{with Kleene star}\ 
        C^* = \begin{pmatrix}
            0  & 1 & -5\\
            -4 & 0 & -9\\
            0  & 3 &  0
        \end{pmatrix}.
    \] The resulting polytrope is given by the inequalities \begin{align*}
        x_2 - x_1 &\leq -1,& x_3 - x_1 &\leq 5,& x_3 - x_2 &\geq 3, \\
        x_2 - x_1 &\geq -4,& x_3 - x_1 &\geq 0,& x_3 - x_2 &\leq 9. \qedhere
    \end{align*} 
\end{example}

While the conversion of a facet description ($h$-description) into a vertex description ($v$-description) 
of a classical polytope is difficult in general (even exponential in some cases \cite{Bremner:1999}),
we can obtain a subset of the classical vertices of a polytrope using the Floyd--Warshall algorithm in time \(\algO(n^3)\).

To calculate all pseudovertices---that is, all vertices of a polytrope as a classical polytope---we use the 
following result to calculate the pseudovertices on all line segments between 
all pairs of tropical vertices \cite{DS:2004}.

\begin{theorem}[{\cite[Prop.\ 3]{DS:2004}}]\label{thm:breakpoints-trop-line}
    The tropical line segment between two points \(x,y\in\torus\) is the union of at most \(n-1\) ordinary line segments.
\end{theorem}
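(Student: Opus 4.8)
The plan is to parametrize the tropical segment $\tconv(x,y)$ and exploit that, in each coordinate, the parametrization is piecewise linear with a single breakpoint. Since we work in $\torus$, we may normalize the tropical scalars and write $\tconv(x,y)$ as the image of the map $\gamma\colon\R\to\torus$ given by
\[
    \gamma(t) \;=\; x \oplus (t\odot y), \qquad\text{so that}\qquad \gamma(t)_i \;=\; \max\{\,x_i,\; t+y_i\,\}\quad(1\le i\le n),
\]
noting that $\gamma(t)$ equals the endpoint $x$ once $t$ is sufficiently negative and equals the endpoint $y$ (in $\torus$, its representative being $t\one + y$) once $t$ is sufficiently large, so $\gamma$ traces out the whole segment, endpoints included.

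First I would record that the $i$-th coordinate function $t\mapsto \max\{x_i, t+y_i\}$ has a unique breakpoint at $t_i \coloneqq x_i - y_i$: it is constant, equal to $x_i$, for $t\le t_i$, and has slope $1$, equal to $t+y_i$, for $t\ge t_i$. Let $s_1 < s_2 < \dots < s_k$ be the distinct values among $t_1,\dots,t_n$, so that $k\le n$. These $k$ numbers cut $\R$ into the two unbounded rays $(-\infty, s_1]$ and $[s_k, \infty)$ together with the $k-1$ bounded intervals $[s_1,s_2], \dots, [s_{k-1},s_k]$.

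Next I would argue that $\gamma$ is affine on each bounded interval $[s_j, s_{j+1}]$: since no breakpoint lies in its interior, every coordinate is throughout this interval either in its constant regime or in its slope-$1$ regime, whence $\gamma(t) = a^{(j)} + t\,v^{(j)}$ for a fixed vector $a^{(j)}\in\R^n$ and a fixed vector $v^{(j)}\in\{0,1\}^n$. Therefore $\gamma([s_j,s_{j+1}])$ is the ordinary line segment from $\gamma(s_j)$ to $\gamma(s_{j+1})$, which degenerates to a point exactly when all entries of $v^{(j)}$ are equal. On the two unbounded rays $\gamma$ is constant modulo $\R\one$ --- equal to $x$ on $(-\infty,s_1]$ and to $y$ on $[s_k,\infty)$ --- so these rays only reproduce the endpoints $\gamma(s_1)$ and $\gamma(s_k)$, which already lie on the adjacent bounded pieces. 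Since consecutive pieces agree at the shared points $s_j$, we obtain $\tconv(x,y) = \bigcup_{j=1}^{k-1}\gamma([s_j,s_{j+1}])$, a union of at most $k-1\le n-1$ ordinary line segments.

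I do not expect a genuine obstacle here; the only points requiring care are the combinatorial bookkeeping --- that $n$ breakpoints produce at most $n-1$ bounded intervals once the two unbounded rays are seen to collapse onto the endpoints $x$ and $y$ --- and the verification that the affine pieces glue along the $s_j$, so that their union is the entire segment and not merely a proper subset of it.
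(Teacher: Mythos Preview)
Your argument is correct. The paper does not supply its own proof of this statement --- it is quoted from \cite{DS:2004} --- but the proof of the corollary immediately following it refers to ``sorting the vector of differences $x-y$,'' which is exactly your identification of the breakpoints $t_i = x_i - y_i$, so your approach coincides with the intended one.
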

\begin{corollary}
    Given a matrix \(C\in\TT^{n\times n}\) with \(P = \Q(C)\), the classical vertex description of \(P\)
    may be calculated in \(\algO(n^3\log{n})\) time.
    \begin{proof}
        The procedure of calculating the breakpoints of a tropical line segment as in \Cref{thm:breakpoints-trop-line}
        requires sorting the vector of differences \(x-y\), which can be done in \(\algO(n\log{n})\) time.
        To find all pseudovertices of \(P\), we need to calculate the breakpoints of all line segments between
        all pairs of tropical vertices.

        The tropical vertices of \(P\) are obtained by computing the Kleene star \(C^*\) which can be done 
        in \(\algO(n^3)\) by the Floyd--Warshall algorithm.
        Then, we need to compute the breakpoints of \(n^2\) tropical line segments which involves 
        a sorting operation of \(\algO(n\log{n})\). This means the total time required to compute all breakpoints 
        is \(\algO(n^3\log{n})\).
    \end{proof}
\end{corollary}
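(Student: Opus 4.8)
The plan is to exploit the three-way identification $P = \Q(C) = \Q(C^*) = \tconv(C^*)$ recalled above: computing the Kleene star $C^*$ converts the facet description of $P$ into a tropical vertex description whose generators are precisely the $n$ columns of $C^*$. So the first step is to compute $C^*$. Since $C^* = I_n \oplus C \oplus \cdots \oplus C^{\odot(n-1)}$ is the matrix of longest-path weights in the weighted digraph on $n$ nodes with arc weights given by $C$, the Floyd--Warshall algorithm produces it in $\algO(n^3)$ time, and the same run certifies $P \neq \emptyset$, i.e.\ the absence of a positive-weight cycle.

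Next I would invoke the structural fact from \cite{DS:2004} that every classical vertex (pseudovertex) of a polytrope lies on the tropical line segment joining two of its tropical vertices; consequently the complete set of pseudovertices of $P$ is obtained by enumerating the breakpoints of the $\binom{n}{2}$ tropical segments spanned by pairs of columns of $C^*$. By \Cref{thm:breakpoints-trop-line} each such segment is a union of at most $n-1$ ordinary segments, hence contributes at most $n$ breakpoints. For a fixed pair $x,y$ these breakpoints are determined by the sorted order of the coordinates of $x-y$: that order prescribes the succession of ordinary segments, and the breakpoints are then read off in a single linear pass. Sorting costs $\algO(n\log n)$ and the pass costs $\algO(n)$, so each pair is processed in $\algO(n\log n)$.

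Summing the contributions gives $\algO(n^3)$ for the Kleene star plus $\algO(n^2)$ pairs at $\algO(n\log n)$ each, i.e.\ $\algO(n^3\log n)$ overall, and this term dominates. I expect the only point genuinely needing care to be the claim that no pseudovertex is missed --- that every classical vertex of $P$ really does arise as a breakpoint on a \emph{pairwise} tropical segment, rather than only from a configuration involving three or more generators. This is exactly where the polytrope hypothesis (as opposed to that of a general tropical polytope) is used; it follows from the type/covector decomposition of tropical polytopes together with classical convexity of $P$, and I would cite \cite{DS:2004} for it. A minor caveat, not affecting the asymptotics, is that the list produced this way is the set of all breakpoints and may contain points in the relative interior of an edge of $P$; this is precisely the $v$-description in the sense used here, and any further pruning to an irredundant vertex list can be done as post-processing.
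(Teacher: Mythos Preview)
Your proposal is correct and follows essentially the same approach as the paper: compute the Kleene star $C^*$ via Floyd--Warshall in $\algO(n^3)$, then for each of the $\algO(n^2)$ pairs of tropical vertices sort the difference vector in $\algO(n\log n)$ to read off the breakpoints, totalling $\algO(n^3\log n)$. If anything, you are more explicit than the paper about the one nontrivial ingredient---that every pseudovertex of a polytrope already lies on a \emph{pairwise} tropical segment between tropical vertices---which the paper simply asserts with a reference to \cite{DS:2004}.
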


It turns out that the metric balls for the tropical metric are polytropes. We give a variant of the 
construction of the corresponding matrix \(C\) due to \citeauthor{Joswig:ETC} \cite[Example 6.39]{Joswig:ETC}
as it will prove useful for explicit computations.
\begin{lemma}\label{lem:trop-ball-polytrope}
    Let \(y\in\torus\) and \(r\geq 0\). The tropical ball \(\TropBall{y}{r}\) is a polytrope.

    \begin{proof}
        First, assume that \(y = (0,\dots,0)\). Then \(x\in\torus\) satisfying \[
            \dtr(x,0) = \max_{1\leq j < i \leq {n}}\big|x_i-x_j\big| \leq r
        \] is equivalent to \begin{equation}\label{eq:polytrope-ineq}
            x_i-x_j \geq -r,\quad\text{and}\quad x_j - x_i \geq -r,
        \end{equation} which is exactly the polytrope given by the matrix \[
            C = \begin{pmatrix}
                0 & -r & \dots & -r \\
                -r & 0 & \ddots & \vdots \\
                \vdots & \ddots & \ddots & -r \\
                -r & \dots & -r & 0
            \end{pmatrix}.
        \] 
        Now, we can reduce the case of arbitrary center \(y\in\torus\) to the above case by translation \[
            \TropBall{y}{r} - y = \TropBall{0}{r}.
        \] Inspecting the inequalities in \eqref{eq:polytrope-ineq} after applying this translation 
        gives the matrix \(C\) with entries \[
            c_{ij} = -r + y_i - y_j
        \] representing \(\TropBall{y}{r}\) as a polytrope.
    \end{proof}
\end{lemma}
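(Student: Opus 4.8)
The plan is to exhibit $\TropBall{y}{r}$ explicitly in the form $\Q(C)$ for a suitable tropical matrix $C\in\TT^{n\times n}$. Once that is done the conclusion is immediate: any nonempty set $\Q(C)$ is classically convex, being the solution set of the linear inequalities $x_i-x_j\geq c_{ij}$, and it is tropically convex because, by the results of \cite{DS:2004} and \cite{deLaPuente:2013} recalled above, $\Q(C)=\Q(C^*)=\tconv(C^*)$ is a tropical polytope; being both, it is a polytrope by definition. So the whole argument reduces to reading off the matrix $C$ from the metric-ball condition.

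First I would dispose of the centered case $y=(0,\dots,0)$. By \Cref{def:tropicalmetric} we have $\dtr(x,0)=\max_{1\leq i\leq n}x_i-\min_{1\leq i\leq n}x_i$, so $\dtr(x,0)\leq r$ is equivalent to $\max_i x_i-\min_i x_i\leq r$, which in turn holds exactly when $x_i-x_j\leq r$ for every pair $i,j$, i.e.\ $x_j-x_i\geq -r$ for all $i\neq j$. Hence $\TropBall{0}{r}=\Q(C)$ where $C$ has $0$ on the diagonal and $-r$ in every off-diagonal entry, and the lemma follows in this case. For an arbitrary center $y$ I would repeat the same unwinding with a representative $x-y$ of $[x]-[y]$: the inequality $\dtr(x,y)\leq r$ becomes $(x_i-y_i)-(x_j-y_j)\geq -r$ for all $i,j$, that is $x_i-x_j\geq -r+y_i-y_j$, so $\TropBall{y}{r}=\Q(C)$ with $c_{ij}=-r+y_i-y_j$. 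Conceptually this is just the statement $\TropBall{y}{r}=y+\TropBall{0}{r}$ together with the fact that translating by an arbitrary vector $v\in\torus$ preserves both classical and tropical convexity---the latter because $(\lambda\odot(a+v))\oplus(\mu\odot(b+v))=v+\bigl((\lambda\odot a)\oplus(\mu\odot b)\bigr)$---but the explicit matrix makes invoking this principle unnecessary.

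I do not expect a genuine obstacle here; the only points that warrant a word of care are the elementary equivalence between ``$\max$ minus $\min$ is at most $r$'' and ``all pairwise coordinate differences are at most $r$'', and the appeal to the cited fact that every nonempty $\Q(C)$ is a polytrope, which I would use rather than reprove. One should also observe in passing that $\TropBall{y}{r}$ is nonempty---it contains $[y]$, since $r\geq 0$---so that the identity $\Q(C)=\tconv(C^*)$ indeed applies and the representation as a tropical polytope is not vacuous.
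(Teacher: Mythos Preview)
Your proposal is correct and follows essentially the same route as the paper: unwind the metric-ball condition into the pairwise inequalities $x_i-x_j\geq -r+y_i-y_j$, first for $y=0$ and then for general $y$ via translation, and read off the matrix $C$ exhibiting $\TropBall{y}{r}=\Q(C)$. If anything, you are slightly more explicit than the paper about why $\Q(C)$ is a polytrope (invoking the $\Q(C)=\tconv(C^*)$ identity) and about nonemptiness, whereas the paper simply asserts that the inequality system ``is exactly the polytrope given by the matrix $C$''.
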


%%%%%%%%%%%%%%%%%%%%%%%%%%%%%%%%%%%%%%%%%%%%%%%%%%%

\section{Defining Tropical Fr\'{e}chet Means}

With the setup and relevant results outlined in the previous section, we now define our main object of interest. A tropical Fr\'{e}chet mean of $P$ is a point $\tfm$ that minimizes the sum of squares of the tropical distances from $\tfm$ to every point in $P$. 

\begin{definition}
For a finite set $P$ of points $p^{(1)}, p^{(2)},\ldots, p^{(m)} \in \torus$,  any point $\tfm \in \torus$
minimizing the objective function 
\begin{equation}\label{eq:frechet}
  c(x) \coloneqq \sum_{\nu=1}^{m}{\dtr^2(\tfm,p^{(\nu)})}
\end{equation}
is a {\em tropical Fr\'echet mean} of $P$. 

\end{definition}

First, we note that any finite set \(P\subset\torus\) indeed admits a tropical Fr\'echet mean.

\begin{lemma}\label{Lem:existence}
For any finite set $P$ of points $p^{(1)}, p^{(2)},\ldots, p^{(m)} \in \torus$, there exists at least one tropical Fr\'echet mean 
$\tfm \in \torus$.

\begin{proof}
    The tropical projective torus equipped with the tropical metric is a proper metric space 
    (i.e., every closed, bounded subspace is compact) \cite{MOP:2014}.
    Thus, the existence of the tropical Fréchet mean is guaranteed \cite{ohta:2012}. 
\end{proof}
\end{lemma}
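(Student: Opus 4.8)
The plan is to realize a tropical Fréchet mean as a minimizer of the objective $c$ from \eqref{eq:frechet} over a single closed tropical ball, invoking the classical fact that a continuous real‑valued function attains its minimum on a compact set. Concretely, I would verify three things: that $c$ is continuous on $\torus$, that $c$ is coercive, and that closed bounded subsets of $(\torus,\dtr)$ are compact.

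Continuity is immediate: for each fixed $j$, the map $x\mapsto\dtr(x,p_j)$ is $1$‑Lipschitz by the triangle inequality for $\dtr$, hence continuous; composing with $t\mapsto t^2$ and summing the finitely many resulting functions preserves continuity. For coercivity I would show $c(x)\to\infty$ as $\dtr(x,p_1)\to\infty$. Setting $D\coloneqq\max_{1\le j\le m}\dtr(p_1,p_j)$, the triangle inequality gives $\dtr(x,p_j)\ge\dtr(x,p_1)-D$ for every $j$, so whenever $\dtr(x,p_1)\ge D$ the right‑hand side is nonnegative and squaring yields $c(x)\ge m\bigl(\dtr(x,p_1)-D\bigr)^2$. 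In particular there is a radius $R>0$ with $c(x)>c(p_1)$ for every $x$ with $\dtr(x,p_1)>R$, so the infimum of $c$ over $\torus$ equals its infimum over $\TropBall{p_1}{R}$.

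Finally, $(\torus,\dtr)$ is proper: choosing the representative of each class with last coordinate $0$ identifies $\torus$ with $\R^{n-1}$, and under this identification $\dtr$ is the metric induced by the norm $\|[z]\|=\max_i z_i-\min_i z_i$ on $\torus$, so the Heine--Borel theorem applies (alternatively one may cite properness directly, as in \cite{MOP:2014}). Hence $\TropBall{p_1}{R}$, being closed and bounded, is compact, and the continuous function $c$ attains its minimum there at some $\tilde{x}$; by the previous paragraph $\tilde{x}$ is in fact a global minimizer of $c$, i.e.\ a tropical Fréchet mean of $P$.

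The only step that takes a little care is the coercivity estimate --- that is, ruling out that the infimum of $c$ is approached only along a sequence escaping every compact set; the Lipschitz continuity of distance functions and the properness of the tropical projective torus are then standard inputs.
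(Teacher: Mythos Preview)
Your proof is correct and follows essentially the same route as the paper: both arguments rest on the properness of $(\torus,\dtr)$, with the paper simply citing this fact together with a general existence result for Fr\'echet means in proper metric spaces, while you unpack that citation into the explicit continuity/coercivity/compactness argument. Your version is more self-contained, but there is no substantive difference in strategy.
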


We now show a convexity result for the tropical distance---namely, that the squared tropical distance 
to a fixed point is a convex function. In other words, the squared tropical distance of a convex combination 
is less than or equal to the convex combination of the squared tropical distances.
    
\begin{lemma}\label{lem:convexity}
    Let $x,y,p \in \torus$. Set $d_{\lambda}=\dtr(\lambda y+(1-\lambda)x, p)$
    for $0\le \lambda \le 1$. Then
    \begin{enumerate}[(i)]
        \item $d_{\lambda}^{2} \leq \lambda \cdot d_{1}^{2} + (1-\lambda) \cdot d_{0}^{2}$; and 
        \item If equality in (i) holds for $0<\lambda<1$, then $d_{0}=d_{1}$.
    \end{enumerate}

    \begin{proof}
        Since the tropical metric is invariant under translation, we may assume $p=\tilde{0}$. 
        Then, by Definition \ref{def:tropicalmetric},
        \begin{equation*}
            d_{\lambda}=\max_{1\le i\le n}(\lambda y_i +(1-\lambda) x_i)-\min_{1\le i\le n}(\lambda y_i +(1-\lambda) x_i).
        \end{equation*}
        For given $\lambda$, there exist indices $i, i'$ such that
        \begin{equation}
        \begin{split}
            d_{\lambda} &=(\lambda y_i+(1-\lambda) x_i)-(\lambda  	  y_{i'} +(1-\lambda) x_{i'}) \\
            &=\lambda(y_i-y_{i'})+(1-\lambda)(x_i-x_{i'}).
        \end{split}
        \end{equation}
     
        Note that $|y_i - y_{i'}|\le d_{1}$ and $|x_i - x_{i'}|\le d_{0}$, so we get the following inequality:
        \begin{equation}
        \begin{split}
            d_{\lambda}^{2} &= (\lambda(y_i-y_{i'})+(1-\lambda)(x_i-x_{i'}))^{2} \\
            & \le (\lambda d_{1}+(1-\lambda)d_{0})^{2} \\      
            & = \lambda^{2} d_{1}^{2} + (1-\lambda)^{2} d_{0}^{2} + \lambda(1-\lambda) \cdot 2d_{1}d_{0} \\
            & \le \lambda^{2} d_{1}^{2} + (1-\lambda)^{2} d_{0}^{2} + \lambda(1-\lambda)(d_{1}^{2}+d_{0}^{2}) \\
            & = \lambda d_{1}^{2} + (1-\lambda) d_{0}^{2},
        \end{split}
        \end{equation}
        which proves (i). For (ii), suppose $0<\lambda<1$ and the equality in (i) holds. 
        Then $\lambda(1-\lambda)\cdot 2d_{1}d_{0}=\lambda(1-\lambda)(d_{1}^{2}+d_{0}^{2})$, thus $(d_{1}-d_{0})^{2}=0$, so (ii) holds.
    \end{proof}
\end{lemma}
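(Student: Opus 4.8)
The plan is to recognise that, after a translation, $d_\lambda$ is the value of a seminorm at a convex combination, and then to compose the convexity of that seminorm with the convexity of squaring. Concretely, using translation invariance of $\dtr$ I would assume $p=\tilde{0}$, so that $d_\lambda=\rho(\lambda y+(1-\lambda)x)$ where $\rho(z)\coloneqq\max_{1\le i\le n}z_i-\min_{1\le i\le n}z_i$. The map $\rho$ is nonnegative, positively homogeneous, and subadditive (both $z\mapsto\max_i z_i$ and $z\mapsto-\min_i z_i$ are sublinear), and it vanishes on $\R\one$, so it descends to a well-defined convex function on $\R^n/\R\one$. Convexity then gives $d_\lambda=\rho(\lambda y+(1-\lambda)x)\le\lambda\rho(y)+(1-\lambda)\rho(x)=\lambda d_1+(1-\lambda)d_0$, with both sides nonnegative.

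For part (i), I would square this inequality and then invoke convexity of $t\mapsto t^2$ (equivalently, Jensen's inequality for the two-point distribution with weights $\lambda,1-\lambda$ on $d_1,d_0$), obtaining $d_\lambda^2\le(\lambda d_1+(1-\lambda)d_0)^2\le\lambda d_1^2+(1-\lambda)d_0^2$, where the gap in the last step is exactly $\lambda(1-\lambda)(d_1-d_0)^2\ge 0$. For part (ii), equality in (i) forces equality throughout this chain, in particular $\lambda(1-\lambda)(d_1-d_0)^2=0$; since $0<\lambda<1$, this yields $d_0=d_1$.

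I do not expect a real obstacle here. The one point requiring a little care is the equality case: one must check that equality in the composite bound genuinely collapses the squaring step---so that $(d_1-d_0)^2$ vanishes---rather than leaving slack hidden in the seminorm inequality; since all quantities involved are nonnegative, this is immediate. A slightly more elementary variant, if one prefers not to name $\rho$, is to fix indices $i,i'$ realising the maximum and minimum of $\lambda y+(1-\lambda)x$, write $d_\lambda=\lambda(y_i-y_{i'})+(1-\lambda)(x_i-x_{i'})$, bound $|y_i-y_{i'}|\le d_1$ and $|x_i-x_{i'}|\le d_0$, and then run the same squaring argument; this proves both parts with no extra machinery.
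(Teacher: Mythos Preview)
Your proposal is correct and follows essentially the same approach as the paper: translate so $p=\tilde{0}$, establish $d_\lambda\le\lambda d_1+(1-\lambda)d_0$, square, and then use $2d_0d_1\le d_0^2+d_1^2$ (your Jensen step), with the equality case read off from the vanishing of $\lambda(1-\lambda)(d_1-d_0)^2$. In fact, the ``slightly more elementary variant'' you describe at the end---fixing indices $i,i'$ and bounding $|y_i-y_{i'}|\le d_1$, $|x_i-x_{i'}|\le d_0$---is verbatim the paper's proof; your seminorm phrasing is just a cleaner packaging of the same inequality chain.
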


It turns out that knowing one Fr\'echet mean for a finite set \(P\) already provides very significant information on all other possible 
Fr\'echet means for \(P\).
This results in the following characterization in terms of the tropical distance.
    
\begin{theorem}\label{thm:frechet}
     For a finite set $P$ of points $p^{(1)}, p^{(2)},\ldots, p^{(m)} \in \torus$, there exist constants 
     $d_{1},d_{2},\ldots,d_{m}\ge 0$ such that for any Fr\'echet mean $\tfm$ of $P$ and $1\le\nu\le m$, 
     we have $$\dtr(\tfm,p^{(\nu)})=d_{\nu}.$$

    \begin{proof}
        If $P$ has a unique Fr\'echet mean, the claim is immediate. Otherwise, suppose $\tfm$ and $\tfm[y]$ 
        are two different Fr\'echet means of $P$. This means
        \[
            C \coloneqq \sum_{\nu=1}^{m}{\dtr^2(\tfm,p^{(\nu)})}=\sum_{\nu=1}^{m}{\dtr^2(\tfm[y],p^{(\nu)})},
        \]
        is minimal among all points in $\R^{n}/\R\one$. Hence, for any $0<\lambda<1$, we have
        \begin{equation*}
            \sum_{\nu=1}^{m}{\dtr^2(\lambda \tfm[y] + (1-\lambda)\tfm,\ p^{(\nu)})}\ge C.
        \end{equation*}
        By \Cref{lem:convexity}.(i),  we get for $1\le\nu\le m$ that
        \begin{equation}\label{eq:individual}
            \dtr^2(\lambda \tfm[y] + (1-\lambda) \tfm,\ p^{(\nu)}) 
            \le \lambda\cdot \dtr^2(\tfm[y],p^{(\nu)}) +(1-\lambda) \cdot \dtr^2(\tfm,p^{(\nu)}),
        \end{equation}
        and summing over $j$ gives
        \begin{equation*}
            \sum_{\nu=1}^{m}\dtr^2(\lambda \tfm[y] + (1-\lambda) \tfm,\ p^{(\nu)})\le \lambda C + (1-\lambda) C = C.
        \end{equation*}
        
        Hence all inequalities in \eqref{eq:individual} are equalities. Using \Cref{lem:convexity}.(ii) we obtain 
        $$\dtr(\tfm,p^{(\nu)})=\dtr(\tfm[y],p^{(\nu)})$$ for $1\le\nu\le m$. This means the constants \(d_\nu\) are given by 
        $d_\nu=\dtr(\tfm,p^{(\nu)})$.
    \end{proof}
\end{theorem}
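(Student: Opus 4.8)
The plan is to exploit the convexity of the Fréchet objective just established in \Cref{lem:convexity}, together with its equality case. Since $\R^n/\R\one \cong \R^{n-1}$, classical convex combinations $\lambda\tilde{y} + (1-\lambda)\tilde{x}$ make sense in the tropical projective torus, and \Cref{lem:convexity}.(i) shows that each summand $\dtr^2(\,\cdot\,,p_j)$ of the objective $c$ from \eqref{eq:frechet} is convex; hence $c$ is convex, and its minimizer set --- the set of Fréchet means of $P$ --- is a convex subset of $\R^n/\R\one$ that is nonempty by \Cref{Lem:existence}. If this set is a single point $\tilde{x}$, the theorem holds with $d_j := \dtr(\tilde{x},p_j)$, so I may assume there are two distinct Fréchet means.

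First I would pick distinct Fréchet means $\tilde{x} \neq \tilde{y}$, both attaining the minimal objective value $C := c(\tilde{x}) = c(\tilde{y})$, and examine the classical segment $z_\lambda := \lambda\tilde{y} + (1-\lambda)\tilde{x}$ for $0 < \lambda < 1$. On one hand $c(z_\lambda) \geq C$ by minimality; on the other hand, summing the per-index inequalities of \Cref{lem:convexity}.(i) over $j$ yields $c(z_\lambda) \leq \lambda C + (1-\lambda)C = C$. So $c(z_\lambda) = C$, and since the sum of the nonnegative quantities $\lambda\dtr^2(\tilde{y},p_j) + (1-\lambda)\dtr^2(\tilde{x},p_j) - \dtr^2(z_\lambda,p_j)$ is zero, each of them is zero --- i.e., equality holds in \Cref{lem:convexity}.(i) for every $j$ simultaneously. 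Then \Cref{lem:convexity}.(ii) gives $\dtr(\tilde{x},p_j) = \dtr(\tilde{y},p_j)$ for every $j$. Fixing one Fréchet mean $\tilde{x}_0$ and putting $d_j := \dtr(\tilde{x}_0,p_j) \geq 0$, this applies (with $\tilde{y} = \tilde{x}_0$, the case $\tilde{x} = \tilde{x}_0$ being trivial) to any Fréchet mean $\tilde{x}$, proving $\dtr(\tilde{x},p_j) = d_j$ for all $j$.

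I do not anticipate a real obstacle once \Cref{lem:convexity} is available --- the substance of the argument is essentially that lemma. The one step deserving care is the passage from ``the sum is an equality'' to ``each summand is an equality,'' which is precisely why the \emph{per-summand} bound in \Cref{lem:convexity}.(i) (not merely a bound on $c$ itself) is needed, together with the minor bookkeeping that $\R^n/\R\one$ is closed under classical convex combinations so that \Cref{lem:convexity} may legitimately be invoked along the segment joining two means.
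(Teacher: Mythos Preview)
Your proposal is correct and follows essentially the same argument as the paper: take two Fr\'echet means, use minimality to get $c(z_\lambda)\ge C$, sum the per-summand bounds of \Cref{lem:convexity}.(i) to get $c(z_\lambda)\le C$, conclude each summand attains equality, and invoke \Cref{lem:convexity}.(ii). Your added remarks on convexity of the minimizer set and well-definedness of classical convex combinations in $\R^n/\R\one$ are sound extra care, not a departure from the paper's method.
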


\subsection{The Fréchet Mean Polytrope}
As a consequence of \Cref{thm:frechet}, the set \(\overline{P}\) of all possible Fr\'echet means lies in the intersection
of tropical balls. 
We can actually show the converse and characterize \(\overline{P}\) as 
this intersection of metric balls. This justifies the following terminology.
\begin{definition}
    For a finite set \(P\subset\torus\) of points, we denote the set of Fr\'echet means of $P$ by $\overline{P}$ 
    and call it the \emph{Fr\'echet mean polytrope} (or \emph{FM polytrope} for short) of \(P\).
\end{definition}

\begin{theorem}\label{cor:polytope}
    For a finite set $P \subset \R^n/\R\one$, the set $\overline{P}$ is a polytrope in $\R^n/\R\one \cong \R^{n-1}$.
    In particular, \(\overline{P}\) is bounded in \(\torus\).

    \begin{proof}
         From \Cref{thm:frechet}, we get constants $d_{1},\ldots,d_{m}$ such that every Fr\'echet mean $\tfm$ of $P$ 
         satisfies $\dtr(\tfm,p^{(\nu)})=d_\nu$ for $1\le\nu\le m$.
         Then $\tfm$ belongs to the closed balls \(\TropBall{p^{(\nu)}}{d_\nu}\) for every \(1\leq\nu\leq m\),
         meaning \(\overline{P} \subseteq \bigcap_{\nu=1}^m \TropBall{p^{(\nu)}}{d_\nu}\).
         
         Conversely, if a point $\tfm[y]$ satisfies the inequalities ${\dtr(\tfm[y],p^{(\nu)})\le d_{j}}$ for \(1\leq j\leq m\),
         then
         \begin{equation}\label{eq:ineq}
            \sum_{j=1}^{m}{\dtr^2(\tfm[y],p^{(\nu)})}\le \sum_{j=1}^{m}{d_{j}^{2}}.
         \end{equation} 
         The sum of squares on the right hand side of \eqref{eq:ineq} is minimal, so $\tfm[y]$ is already a Fr\'echet mean of $p$,
         meaning \({\overline{P} = \bigcap_{\nu=1}^m \TropBall{p^{(\nu)}}{d_\nu}}\).
         
         Finally, every closed ball centered at $p^{(\nu)}$ with radius $d_\nu$ is defined by finitely 
         many linear inequalities of the form \(x_i - x_j \leq c^{(\nu)}_{ij}\) and it is bounded in $\R^{n-1}$.
         Thus, the intersection of finitely many bounded polytropes is still a bounded polytrope, 
         which completes our proof.
    \end{proof}
\end{theorem}

\begin{figure}
\centering
\includestandalone[width=.6\linewidth]{figures/tropical_balls2}    
\caption{%
This figure shows the intersection of balls centered at ${P = \{(0,0,8), (0,2,4), (0,5,3), (0,10,2)\}}$. 
The associated Fréchet mean polytrope \(\overline{P}\) is the line segment from $(0,3,3)$ and $(0,4,4)$ 
and the minimal sum of squares is 34.
}
\label{fig:intersect2}
\end{figure}

\subsection{Case Study: Two Points}

We first investigate the case of computing the tropical Fr\'echet mean for two points in $\R^n/\R\one$, thus the sample size is $m = 2$. 
It turns out these Fréchet means are closely related to tropical geodesics.
\begin{definition}
For points $x,y \in \R^n/\R\one$, the {\em tropical line segment} between them is
$$
\Gamma^{\mathrm{tr}}_{x, y} 
\coloneqq \{\,
    a \odot x \oplus b \odot y \mid a, b \in \R 
\,\}\subset\torus.
$$

We say that \(m\in\Gamma^{\mathrm{tr}}_{x, y}\) is the \emph{midpoint} of \(x\) and \(y\) if it is equidistant to \(x\) and \(y\).
\end{definition}
Tropical line segments are geodesics, but these are not the only geodesics; 
in fact, there are infinitely many geodesics between any two given points \cite{MLYK:2022}.

\begin{proposition}
The midpoint of a tropical geodesic $\Gamma_{p_1, p_2}$ between two points $p_1,p_2 \in \R^n/\R\one$ 
is a tropical Fr\'echet mean. 
Conversely, every tropical Fr\'echet mean of $P= \{ p_1, p_2\}$ is the midpoint of some geodesic $\Gamma_{p_1,p_2}$.
\end{proposition}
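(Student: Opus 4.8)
\textbf{Proof proposal.} The plan is to show both inclusions: every midpoint of a geodesic $\Gamma_{p_1,p_2}$ is a Fréchet mean, and every Fréchet mean arises this way. For the first direction, let $m$ be a point on some geodesic equidistant to $p_1$ and $p_2$, so $\dtr(m,p_1) = \dtr(m,p_2) =: r$. Since $m$ lies on a geodesic between $p_1$ and $p_2$, we have $\dtr(p_1,p_2) = \dtr(p_1,m) + \dtr(m,p_2) = 2r$, hence $r = \frac{1}{2}\dtr(p_1,p_2)$. The objective value at $m$ is then $2r^2 = \frac{1}{2}\dtr^2(p_1,p_2)$. To see this is optimal, observe that for any $x \in \torus$ the triangle inequality gives $\dtr(x,p_1) + \dtr(x,p_2) \geq \dtr(p_1,p_2)$, and minimizing $a^2 + b^2$ subject to $a + b \geq D$ with $a,b \geq 0$ yields the minimum $D^2/2$ at $a = b = D/2$. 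So $c(x) \geq \frac{1}{2}\dtr^2(p_1,p_2) = c(m)$, proving $m$ is a Fréchet mean.

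\textbf{Converse direction.} Let $\tilde{x}$ be any Fréchet mean of $P = \{p_1,p_2\}$. By \Cref{thm:frechet}, there are constants $d_1, d_2$ with $\dtr(\tilde{x},p_1) = d_1$ and $\dtr(\tilde{x},p_2) = d_2$ for every Fréchet mean. From the argument above, the minimal objective value is $\frac{1}{2}\dtr^2(p_1,p_2)$, so $d_1^2 + d_2^2 = \frac{1}{2}\dtr^2(p_1,p_2)$. We also have $d_1 + d_2 \geq \dtr(p_1,p_2)$ by the triangle inequality; combined with the equality case of the elementary inequality $a^2+b^2 \geq (a+b)^2/2$, this forces $d_1 = d_2 = \frac{1}{2}\dtr(p_1,p_2)$ and, crucially, $\dtr(\tilde{x},p_1) + \dtr(\tilde{x},p_2) = \dtr(p_1,p_2)$. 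The last equality says exactly that $\tilde{x}$ lies on a geodesic from $p_1$ to $p_2$, and since it is equidistant it is the midpoint of that geodesic.

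\textbf{Main obstacle.} The only genuinely tropical input needed is the characterization of geodesics: a point $\tilde{x}$ satisfies $\dtr(\tilde{x},p_1) + \dtr(\tilde{x},p_2) = \dtr(p_1,p_2)$ if and only if $\tilde{x}$ lies on \emph{some} geodesic between $p_1$ and $p_2$, and one must check that such a geodesic actually exists through $\tilde{x}$ (not merely that the distances add up). This is standard in a geodesic metric space: concatenating a shortest path from $p_1$ to $\tilde{x}$ with one from $\tilde{x}$ to $p_2$ produces a path of length $\dtr(p_1,p_2)$, hence a geodesic, and $\tilde{x}$ is a point on it. The remaining steps are the elementary optimization of $a^2+b^2$ on the halfplane $a+b \geq D$, $a, b \geq 0$, which I would not belabor. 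One should also note at the outset (as the paper does) that the existence of a midpoint-type Fréchet mean is not in question since \Cref{Lem:existence} guarantees $\overline{P} \neq \emptyset$; the content here is the precise geometric identification with geodesic midpoints.
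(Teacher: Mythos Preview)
Your proof is correct and follows essentially the same route as the paper: both directions rest on the chain $a^2+b^2 \ge (a+b)^2/2 \ge \ell^2/2$ together with the triangle inequality, and the converse is completed by concatenating geodesics from $p_1$ to $\tilde{x}$ and from $\tilde{x}$ to $p_2$. The only difference is that your invocation of \Cref{thm:frechet} in the converse is unnecessary---the equality analysis alone already pins down $d_1=d_2=\ell/2$ for the single Fréchet mean $\tilde{x}$ under consideration---but this does not affect correctness.
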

\begin{proof}
Let $\ell = \dtr(p_1,p_2)$. For any point $x \in \R^n/\R\one$, we have
\begin{equation}\label{eq:cauchy}
\begin{split}
    \dtr(x, p_1)^{2} + \dtr(x, p_2)^{2} 
    &\geq \frac{\left(\dtr(x, p_1) + \dtr(x, p_2)\right)^{2}}{2} \\
    &\ge \frac{\dtr(p_1,p_2)^{2}}{2} = \frac{\ell^{2}}{2}.
\end{split}
\end{equation}

If $m$ is the midpoint of $\Gamma_{p_1, p_2}$, then 
\[\dtr(m, p_1) = \dtr(m, p_2) = \frac{\ell}{2}. \]
Therefore \eqref{eq:cauchy} is an equality and $m$ is a Fr\'echet mean of $P$.

Conversely, if $\tfm$ is a Fr\'echet mean of $P$, then all equalities in \eqref{eq:cauchy} must hold. 
Hence $\dtr(\tfm, p_1) + \dtr(\tfm, p_2) = \ell$ and $\dtr(\tfm, p_1) = \dtr(\tfm, p_2)$, 
which means $\dtr(\tfm, p_1) = \dtr(\tfm, p_2) = \frac{\ell}{2}$.
By definition of $\dtr$, there exists a tropical geodesic from $p_{i}$ to $\tfm$ with length $\frac{\ell}{2}$ for $i=1,2$, 
so their concatenation is a tropical geodesic from $p_{1}$ to $p_{2}$ and $\tfm$ is its midpoint.
\end{proof}

\section{Computing Tropical Fr\'{e}chet Means}
We continue by discussing how to compute tropical Fr\'echet means by solving the optimization problem \eqref{eq:frechet}.
As a consequence of \Cref{cor:polytope}, we may compute the FM polytrope for a finite set $P \subset \R^{n}/\R\one$ in the following manner:
\begin{enumerate}
    \item Compute one Fr\'echet mean \(\tfm\);
    \item Compute the constants $d_\nu$ from Theorem \ref{thm:frechet} from \(\tfm\);
    \item Compute the intersection of the tropical balls \(\TropBall{p^{(\nu)}}{d_\nu}\) centered at each $p^{(\nu)} \in P$ 
        with radius $d_\nu$.
\end{enumerate}

From \Cref{lem:trop-ball-polytrope}, we know that each \(\TropBall{p^{(\nu)}}{d_\nu}\) is a polytrope, which 
simplifies the calculation of the intersection in Step (3). In that case, the FM polytrope \(\overline{P}\) itself is 
a polytrope represented by the square matrix \(\overline{C}\in\R^{n\times n}\) with entries \begin{equation}\label{eq:fm-polytrope-h-desc}
    \overline{c}_{ij} = \max_{1\leq\nu\leq m} -d_\nu + p_i^{(\nu)} - p_j^{(\nu)}.
\end{equation}

\begin{proposition}\label{prop:fm-polytrope-compute}
    Given a tropical Fréchet mean \(\tfm\) for a finite set \(P = \{\, p^{(1)}, \dots, p^{(m)}\,\}\subset\torus\),
    a facet description of the associated Fréchet mean polytrope \(\overline{P}\) can
    be computed in \(\algO(mn^2)\) time. A 
    tropical vertex description of \(\overline{P}\) can be computed in 
    \(\algO(mn^2 + n^3)\) time and the classical vertex description may be computed in
    \(\algO(mn^2 + n^3\log{n})\) time.

    \begin{proof}
        Computing the facet description of \(\overline{P}\) using \eqref{eq:fm-polytrope-h-desc} means we need to
        find the minimum of \(m\) values, which can be done in linear time. Finding the facet description 
        in terms of the matrix \(\overline{C}\) requires \(\algO(n^2)\)-many operations, resulting in a 
        \(\algO(mn^2)\) runtime.

        To obtain a tropical vertex description from \(\overline{C}\), we need to compute the Kleene star \(\overline{C}^*\),
        for which we can use the Floyd--Warshall algorithm in time \(\algO(n^3)\), 
        resulting in a total runtime of \(\algO(mn^2 + n^3)\) when starting from a tropical Fréchet mean.
        
        Finally, computing the classical vertex description using \Cref{thm:breakpoints-trop-line} requires 
        \(\algO(n^3\log{n})\) time, totaling in \(\algO(mn^2 + n^3\log{n})\) time.
    \end{proof}
\end{proposition}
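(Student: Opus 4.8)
The plan is to execute the three-step procedure stated just before the proposition---compute the radii $d_j$, assemble the matrix $\overline{C}$, and then pass to $v$-descriptions---and to bound the cost of each step, taking the Fréchet mean $\tilde{x}$ as given input.

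\emph{The $h$-description.} By \Cref{thm:frechet} the radii are $d_j = \dtr(\tilde{x},p_j)$; each such tropical distance is a maximum minus a minimum over $n$ coordinate differences, so computing all $m$ of them costs $\algO(mn)$. By \Cref{lem:trop-ball-polytrope}, $\TropBall{p_k}{d_k} = \Q(C^{(k)})$ with $c^{(k)}_{ij} = -d_k + p_{k,i} - p_{k,j}$, and by \Cref{cor:polytope} we have $\overline{P} = \bigcap_{k=1}^m \TropBall{p_k}{d_k}$. Since the inequalities $x_i - x_j \geq c^{(k)}_{ij}$ for all $k$ are jointly equivalent to $x_i - x_j \geq \max_k c^{(k)}_{ij}$, the intersection equals the polytrope $\Q(\overline{C})$ with $\overline{C}$ as in \eqref{eq:fm-polytrope-h-desc}. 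Each of the $n^2$ entries of $\overline{C}$ is a maximum of $m$ numbers, so assembling the matrix costs $\algO(mn^2)$, which dominates the $\algO(mn)$ spent on the radii.

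\emph{The tropical and classical $v$-descriptions.} From $\overline{C}$, running the Floyd--Warshall algorithm yields the Kleene star $\overline{C}^*$ in $\algO(n^3)$ time; by the identity $\tconv(\overline{C}^*) = \Q(\overline{C}^*) = \Q(\overline{C})$ recalled earlier, the columns of $\overline{C}^*$ form a tropical $v$-description, for a total of $\algO(mn^2 + n^3)$. For the classical $v$-description, apply \Cref{thm:breakpoints-trop-line} to each of the $\algO(n^2)$ ordered pairs of tropical vertices of $\overline{P}$; each breakpoint computation requires sorting a length-$n$ difference vector at cost $\algO(n\log n)$, so this step costs $\algO(n^3\log n)$ and the grand total is $\algO(mn^2 + n^3\log n)$.

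The bookkeeping here is routine, so I do not expect a genuine obstacle; the one point that deserves an explicit word of justification is that the intersection of the ball-polytropes is obtained simply by taking the entrywise maximum of their defining matrices and is again a polytrope admitting the stated $v$-descriptions---but this is exactly what \Cref{lem:trop-ball-polytrope}, \Cref{cor:polytope}, and the Kleene-star identity together supply, so the argument amounts to assembling these ingredients and accounting for their costs.
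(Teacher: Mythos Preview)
Your proof is correct and follows essentially the same route as the paper: assemble the matrix $\overline{C}$ entrywise from \eqref{eq:fm-polytrope-h-desc} at cost $\algO(mn^2)$, pass to the tropical $v$-description via the Kleene star (Floyd--Warshall, $\algO(n^3)$), and then to the classical $v$-description via the breakpoint computation of \Cref{thm:breakpoints-trop-line} at cost $\algO(n^3\log n)$. If anything, your version is slightly more careful---you separate out the $\algO(mn)$ cost of computing the radii $d_j$ and you make explicit why the intersection of the ball-polytropes is $\Q(\overline{C})$ via the entrywise maximum---but these are refinements of the same argument rather than a different approach.
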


A clear advantage to this approach is that the computation of the facet description is \emph{embarrassingly parallelizable}.
Thus, it remains to compute an initial Fréchet mean for Step (1). This itself is an intricate matter given its nature
as a quadratic optimization problem, which we now discuss.

\subsection{Exact Quadratic Optimization}\label{sec:exact-quadratic}
In order to compute an initial Fr\'echet mean directly, we need the exact solution to a piecewise quadratic 
minimization problem. 
We may break down this problem since the objective function is always the maximum of finitely many quadratic functions 
over $\mathbb{R}$. Thus, we can instead solve the following quadratic program to obtain a tropical Fr\'echet mean:
\begin{optimproblem}
  \objectivefunction{$\sum_{\nu=1}^m d_\nu^2$}
  \variables{$x_i,d_\nu\in\R, i\in[n],\nu\in[m]$}
  \constraints{$d_\nu^2 \geq (x_i - x_j - p_{ij}^{(\nu)})^2$ for \(i\neq j\in[n]\) and \(\nu\in[m]\)}
\end{optimproblem}\noindent
Here, we include the constants \(d_\nu\) from \Cref{thm:frechet} in the quadratic program to solve this optimization globally.
We illustrate this idea with the following example.

\begin{example}\label{ex:quadratic-optimisation}
\begin{figure}
\centering
\includestandalone[width=.5\linewidth]{figures/tropical_balls1}
\caption{%
The tropical Fréchet mean is an intersection of tropical spheres. 
We visualise points in $\mathbb{R}^3/\mathbb{R}\mathbf{1}$ by taking the representative with zero 
as its first coordinate. 
The figure shows the unique Fréchet mean $(0,0,-1)$ of the points from Example \ref{ex:quadratic-optimisation}. 
}
\label{fig:intersect}
\end{figure}

	Let $n=m=3$ and \[P=\{p^{(1)},p^{(2)},p^{(3)}\}=\{(-3,0,0),(0,-6,0),(0,0,-12)\}.\] We claim that 
    $(0,0,-1)\in \overline{P}$ and the corresponding minimal sum of squares is $186$:
	\[
	\begin{split}
		\dtr^{2}((0,0,-1),p^{(1)}) &= \left[(-3-0)-(0-(-1))\right]^{2}=4^{2}=16, \\
		\dtr^{2}((0,0,-1),p^{(2)}) &= \left[(-6-0)-(0-(-1))\right]^{2}=7^{2}=49, \\
		\dtr^{2}((0,0,-1),p^{(3)}) &= \left[(-12-(-1))-(0-0)\right]^{2}=11^{2}=121, \\
		\quad \implies 16+49+121&=186.
	\end{split}
	 \]
	 
	For any $x \in \R^3/\R\one$, the squared tropical distances between \(x\)
    and each point \(p_i\in P\) admit the following expressions.
	\[
	\begin{split}
		\dtr^{2}(x,p^{(1)}) &= \max\left((x_2-x_1-3)^2, (x_3-x_1-3)^2, (x_3-x_2)^2\right), \\
		\dtr^{2}(x,p^{(2)}) &= \max\left((x_2-x_1+6)^2, (x_3-x_1)^2, (x_3-x_2-6)^2\right), \\
		\dtr^{2}(x,p^{(3)}) &= \max\left((x_2-x_1)^2, (x_3-x_1+12)^2, (x_3-x_2+12)^2\right). \\
	\end{split}
	\]
	The objective function for the tropical Fréchet mean is the maximum of $27$ quadratic functions 
    in real variables $x_{1},x_{2},x_{3}$ for each possible combination of the components of each 
    \(\dtr^{2}(\tfm,p^{(\nu)})\). Note that each $x_i^2$ has a nonnegative coefficient. 
    
    First, we show that the maximum of these $27$ functions is always at least $186$. 
    To see this, we use the following lower bounds on each \(\dtr^{2}(\tfm,p^{(\nu)})\).
	\begin{equation}\label{eq:lower-bounds}
	\begin{split}
		\dtr^{2}(x,p^{(1)})&\ge (x_3-x_1+3)^2, \\
		\dtr^{2}(x,p^{(2)})&\ge (x_3-x_2+6)^2, \\
		\dtr^{2}(x,p^{(3)})&\ge \frac{4}{11}(x_3-x_1-12)^2+\frac{7}{11}(x_3-x_2-12)^2.
	\end{split}
	\end{equation}
	These lower bounds allow us to estimate the sum of squares as claimed.
	\[
	\begin{split}
		\sum_{\nu=1}^{3}{\dtr^{2}(x,p^{(\nu)})}
            &\ge (x_3-x_1-3)^2+(x_3-x_2-6)^2 +\frac{4}{11}(x_3-x_1+12)^2 +\frac{7}{11}(x_3-x_2+12)^2 \\
    	&= \frac{3}{11}\left[5x_1^2 - 10x_1x_3 +6x_2^2 - 12x_2x_3
                + 11x_3^2 - 10x_1 - 12x_2 + 22x_3 + 693 \right] \\
            &= \frac{3}{11}\left[5(x_3-x_1+1)^{2}+6(x_3-x_2+1)^{2}+682\right] \\
            &\ge \frac{3}{11}\cdot 682 = 186. \qedhere
	\end{split}
	\] 
\end{example}

Surprisingly, it is always possible to find such a certificate. Note that all quadratic functions that appeared in the computation of the tropical Fréchet mean in 
\Cref{ex:quadratic-optimisation} are sums of squares of linear functions, so their homogeneous degree 2 components are also sums of squares.
We show that it suffices to check all lower bounds that may be expressed as in \eqref{eq:lower-bounds}.
Indeed, this is implied by our following general result.

\begin{theorem}\label{thm:conv}
    Let $f_1, \dots, f_\ell \in \mathbb{R}[x_1, \dots, x_n]$ be quadratic polynomials with 
    nonnegative degree 2 homogeneous components $f_{j,2}$. Define a function 
    $\bar{f} \colon \mathbb{R}^n \rightarrow \mathbb{R}$ by \[
        \bar{f}(x) \coloneqq \max_{j \in [\ell]} f_j(x)
    \] and let \(c^* := \min_{x \in \mathbb{R}^n} \bar{f}(x)\).
    Then there exist nonnegative weights $w_1, \dots, w_\ell\in\R$ such that
    \[
        \sum_{j=1}^\ell w_jf_j(x) \geq c^*, \quad \forall x \in \mathbb{R}^n 
    \] and $\sum_{j=1}^\ell w_j = 1$.
    
    \begin{proof}
        Since $f_{j,2}(x) \ge 0 $ for all $x\in\R^n$, 
        we also have ${f_{j,2}(x-a) \geq 0}$ for any $a \in \R^n$. Thus we may assume 
        that $\bar{f}$ is minimal at $0$, meaning $\bar{f}(0)=c^*$. 
    	
    	Now for $1\le j \le \ell$,  write	
    	\begin{equation}
    		\begin{split}
    			f_j(x) &= f_{j,2}(x) + f_{j,1}(x) + f_{j,0}(x) \\
    				   &= f_{j,2}(x) + \sum_{i=1}^{n}{a_{ji}x_{i}} + b_{j},
    		\end{split}
    	\end{equation}	
    	where $a_{ji}, b_{j}\in \R$. Since $f_j(0)\leq \bar{f}(0) = c^*$ and by assumption $f_{j,2}(x)\geq c^*$, we have $b_{j} \le c^*$, 
        and at least one of the $f_j(0)$ assumes the value $c^*$. 
        After reordering, there is an index $j^* \in \{ 1, \dots, \ell\}$ such that 
        \[
            b_{1}=\cdots=b_{j^*} = c^*\quad\text{and}\quad b_{j^*+1},\cdots,b_{\ell}<c^*. 
        \]
        
    	Since the $f_j$ are continuous functions, small perturbations \(x\in\R^n\)
        of \(0\) still satisfy 
    	\begin{equation}\label{eq:perturbation}
    		\bar{f}(x) 
            = \max_{1\le j\le \ell}{f_j(x)} 
            = \max_{1\le j \le j^*}{f_j(x)}.
    	\end{equation}

        \begin{claim*}
            For any vector $v\in \R^{n}$, the homogeneous linear parts of each \(f_j\) satisfy
        	\begin{equation}\label{eq:halfspaces}
        		\max_{1\le j\le j^*} f_{j,1}(v)
        		= \max_{1\le j\le j^*}\sum_{i=1}^{n}a_{ji}v_{i}
                \ge 0. 
        	\end{equation}
        \end{claim*}
	    To see this, suppose there exists $v\in \R^{n}$ such that for every $1\le j\le j^*$, 
        we have \[\sum_{i=1}^{n}{a_{ji}v_{i}} < c^*. \]
	
    	For $1\le i\le n$, we let $x'_{i}=v_{i}\cdot \varepsilon$ for some $\varepsilon>0$.
        Then for $1\le j\le j^*$, we have 
    	\begin{equation*}
    		\begin{split}
    			f_j(x'_{1},\ldots,x'_{n}) 
                &= f_{j,2}(x'_{1},\ldots,x'_{n}) 
                    + \left(\sum_{j=1}^{n}{a_{ji}v_{i}}\right)\cdot \varepsilon
                    + c^*\\
    			&= C_{i,v}\varepsilon^{2} 
                    + \left(\sum_{j=1}^{n}{a_{ji}v_{i}}\right)\cdot \varepsilon
                    + c^*,
    		\end{split}
    	\end{equation*}
    	where $C_{i,v}$ is a constant that only depends on $f_{j,2}$ and $v$. 
        When $\varepsilon>0$ and $|\varepsilon|$ is small enough, \eqref{eq:perturbation} implies  $f_j(x'_{1},\ldots,x'_{n}) < c^*$, 
        hence $\bar{f}(x'_{1},\ldots,x'_{n})< c^*$. 
        This is a contradiction to $0$ being the global minimum of $\bar{f}$
        which proves \eqref{eq:halfspaces}.  
	
    	Now, let $A=(a_{ji}) \in \R^{j^*\times n}$ and $\one=(1,\ldots,1)^{\TT}\in \R^{j^*}$. By \eqref{eq:halfspaces} the matrix inequality
    	\begin{equation}\label{eq:primal}
    		Av\le -c^*\one
    	\end{equation} has no solution $v\in \R^{n}$. 
        Farkas' Lemma \cite[Proposition 1.7]{Ziegler.Polytopes} implies that there
        exists a vector $y\in \R^{j^*}$ such that $y^{\TT}\ge 0$, 
        $y^{\TT}\cdot A=0$, and $y^{\TT}\cdot c^*\one > 0$. 
        Hence $y_{j}\ge 0$, $\sum_{j=1}^{j^*}c^*{y_{j}}>0$ and for $1\le i\le n$, we have
    	\begin{equation}\label{eq:linear}
    		\sum_{j=1}^{j^*}{a_{ji}y_{j}} = 0.
    	\end{equation}
				
    	Finally, we have
    	\begin{equation*}
    		\begin{split}
    			\sum_{j=1}^{j^*}{y_{j}f_j(x)} &= \sum_{j=1}^{j^*}{y_{j}f_{j,2}(x)} + \sum_{j=1}^{j^*}{y_{j}f_{j,1}(x)} + \sum_{j=1}^{j^*}{y_jc^*} \\
    			&= \sum_{j=1}^{j^*}{y_{j}f_{j,2}(x)} + \sum_{j=1}^{j^*}{y_{j}\left(\sum_{i=1}^{n}{a_{ji}x_{i}}\right)} + \sum_{j=1}^{j^*}{y_jc^*} \\
    			&= \sum_{j=1}^{j^*}{y_{j}f_{j,2}(x)} + \sum_{i=1}^{n}{\left(\sum_{j=1}^{j^*}{y_{j}{a_{ji}}} \right) x_{i}} + \sum_{j=1}^{j^*}{y_jc^*} \\
    			&\overset{\eqref{eq:linear}}{=} \sum_{j=1}^{j^*}{y_{j}f_{j,2}(x)} + \sum_{j=1}^{j^*}{y_jc^*}.
    		\end{split}
    	\end{equation*}
		
    	Hence $\sum_{j=1}^{j^*}{y_{j}f_j(x)} \ge \sum_{j=1}^{j^*}{y_jc^*}$ for all $x\in \R^{n}$. 
        So we can normalize the $y_j$ and take 
    	\[w_{j} = \begin{cases}
    		\frac{y_{j}}{\sum_{j=1}^{j^*}y_j} &\text{if }1\le j\le j^*,\text{ and} \\
    		0, &\text{otherwise},
    	\end{cases} \]
    	which concludes the proof.
\end{proof}
\end{theorem}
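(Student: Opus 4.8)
The plan is to reduce the claim to a theorem of the alternative applied to the first-order data of the $f_j$ at a minimizer of $\bar f$. First I would use translation invariance of the hypotheses: since $f_{j,2}(x-a) \ge 0$ for every $a \in \R^n$, replacing each $f_j(x)$ by $f_j(x+a)$ for a minimizer $a$ of $\bar f$ preserves all assumptions and the value $c^*$, so we may assume that $\bar f$ attains its minimum at the origin, i.e.\ $c^* = \bar f(0) = \max_j f_j(0)$. Write $f_j(x) = f_{j,2}(x) + \sum_{i=1}^n a_{ji} x_i + b_j$ with $b_j = f_j(0)$; then $b_j \le c^*$ for all $j$, and the active set $J^* \coloneqq \{\, j : b_j = c^* \,\}$ is nonempty.

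The heart of the argument is the following first-order inequality: for every $v \in \R^n$ one has $\max_{j \in J^*} \sum_{i=1}^n a_{ji} v_i \ge 0$. I would prove this by contradiction. Suppose $\sum_i a_{ji} v_i < 0$ for all $j \in J^*$, and move along the ray $x(\varepsilon) = \varepsilon v$ for small $\varepsilon > 0$. For $j \in J^*$ one computes $f_j(\varepsilon v) = c^* + \varepsilon \sum_i a_{ji} v_i + \varepsilon^2 f_{j,2}(v)$, and since the linear coefficient is strictly negative, for $\varepsilon$ small the linear term dominates the nonnegative quadratic term, giving $f_j(\varepsilon v) < c^*$. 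For $j \notin J^*$ we have $f_j(0) = b_j < c^*$, so continuity yields $f_j(\varepsilon v) < c^*$ for $\varepsilon$ small. Since there are finitely many indices, a single $\varepsilon > 0$ works for all of them, whence $\bar f(\varepsilon v) < c^* = \bar f(0)$, contradicting global minimality at $0$.

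Let $A = (a_{ji})_{j \in J^*,\, 1 \le i \le n}$. The inequality just proved says exactly that the strict system $Av < 0$ has no solution $v \in \R^n$, so by Gordan's theorem (a form of Farkas' lemma, cf.\ \cite{Ziegler.Polytopes}) there is a nonzero vector $(y_j)_{j \in J^*}$ with $y_j \ge 0$ and $\sum_{j \in J^*} a_{ji} y_j = 0$ for every $i$. Normalizing, put $w_j = y_j / \sum_{k \in J^*} y_k$ for $j \in J^*$ and $w_j = 0$ otherwise, so $w_j \ge 0$ and $\sum_j w_j = 1$. Then
\[
    \sum_j w_j f_j(x) = \sum_{j \in J^*} w_j f_{j,2}(x) + \sum_{i=1}^n \Bigl( \sum_{j \in J^*} w_j a_{ji} \Bigr) x_i + c^* \sum_{j \in J^*} w_j,
\]
where the middle term vanishes by the choice of $w$, the last term equals $c^*$, and the first is $\ge 0$ since each $f_{j,2} \ge 0$ and $w_j \ge 0$; hence $\sum_j w_j f_j(x) \ge c^*$ for all $x \in \R^n$.

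The step I expect to be the main obstacle is the first-order inequality $\max_{j \in J^*} \sum_i a_{ji} v_i \ge 0$: it is the one place where the geometry---global minimality of $\bar f$ at the origin---is used, and it requires the mild but essential care that along the ray the negative linear term must be made to beat the nonnegative quadratic correction uniformly over the finitely many active indices, together with a continuity argument to discard the inactive ones. Everything afterwards is a routine application of a theorem of the alternative and bookkeeping, so the only genuine content lies in that perturbation estimate and in the initial observation that translation invariance lets us place the minimizer at the origin.
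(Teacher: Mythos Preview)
Your proof is correct and follows essentially the same route as the paper's: translate the minimizer to the origin, isolate the active index set $J^*$, establish the first-order inequality $\max_{j\in J^*}\sum_i a_{ji}v_i\ge 0$ by a ray-perturbation argument, and then apply a Farkas/Gordan-type alternative to the matrix of active gradients to produce the weights that kill the linear part. Your invocation of Gordan's theorem directly on the strict system $Av<0$ is in fact a slightly cleaner way to phrase the alternative step than the paper's formulation.
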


We remark that a widely-used method for calculating Fréchet means in the classical setting is \emph{Sturm's algorithm} \cite{Sturm:2003}.
However, a key assumption required to implement this algorithm requires the space to be 
nonpositively curved, which is an assumption that fails in the tropical projective torus.

In fact, the curvature behavior of the tropical projective torus is known to be complicated \cite{Amendola.Monod:2021}
since regions of all positive, negative, and undefined curvature exist throughout the tropical projective torus.
This affects the output of Sturm's algorithm even in negatively curved settings which we illustrate 
with the following example, taken from \citeauthor{Matteo:2021}~\cite{Matteo:2021}.
\begin{example}
    Consider the set \[
        P_1:= \{ (0, 0, 0) ,\, (0, 2, 4),\, (0, 5, 1) \}\subset\mathbb{R}^3/\mathbb{R}\mathbf{1}.
    \] \Cref{fig:Skinny} shows the tropical geodesic triangle, where the edges between the points in $P_1$ 
    are tropical line segments; this is a triangle with negative curvature in the sense of 
    Alexandrov \cite{Amendola.Monod:2021}. The Fréchet mean $(0, 2, 1)$ is marked in green; 
    Sturm's algorithm converges after 46274 iterations to an incorrect minimum, marked in red.
    \end{example}

      \begin{figure}
        \centering
        \includestandalone[width=.4\linewidth]{figures/skinny-triangle}
        \caption{Tropical skinny triangle in the sense of Alexandrov. The correct Fréchet mean is shown in green, whereas Sturm's algorithm converges to a wrong point (in red).}
        \label{fig:Skinny}
    \end{figure}
  
The failure of Sturm's algorithm motivates the need to search for an alternative methods that can compute tropical Fréchet means.

\subsection{Reduced Gradient Algorithm}

The problem of computing an initial tropical Fréchet mean has the following properties that
make it amenable to a \emph{reduced gradient algorithm} \cite{FW:1956}. 
First, the objective function is piecewise quadratic and convex, since
\begin{itemize}
    \item any function \((x_i - x_j + c)^2\) is convex as a composition of convex functions;
    \item \(\dtr(x,p)^2\) for fixed \(p\in\torus\) is convex because it is a maximum over functions
        of the form above; and
    \item sums of convex functions are also convex.
\end{itemize}
Any local minimum of a convex function is then a global minimum \cite[Section 4.2.2]{BV:2004}.
This also guarantees that our proposed greedy \Cref{alg:greedy-frechet} below to compute tropical Fréchet means indeed finds a global minimum. 

\begin{algorithm}
    \caption{A greedy algorithm to compute a tropical Fréchet mean.}
    \label{alg:greedy-frechet}
    
    \begin{algorithmic}[1]
    \Procedure{GreedyFr\'echetMean}{}
    \State \textbf{Input:} data $P = \{p^{(1)}, \dots, p^{(m)}\} \subset \torus$ %and step size $\delta$
    \State \textbf{Output:} Fréchet mean $\overline{P}$ and minimum sum of squares $S$
    \State Initialize $v \gets \frac{1}{m}(p^{(1)} + \dots + p^{(m)})$ and $S \gets c(v)$
    \State Let $k \gets 1$
    \While {$S$ decreases along some direction \(e_i - e_j\)}
    \State Choose a direction \(e_i - e_j\) along which \(S\) decreases
    \State $v \gets v + \frac{2}{k+2} (e_i - e_j)$
    \State Update $S$ and increment $k$
    \EndWhile
    \State \textbf{return} $\left(v, S\right)$
    \EndProcedure
    \end{algorithmic}
\end{algorithm}

\begin{remark}
    In principle, \Cref{alg:greedy-frechet} is a numerical procedure. But in the case where the input data
    only has entries in \(\mathbb{Q}\), the step size in line 8 has been chosen in such a way that \(v\) retains
    rational coordinates in each step. This approach is similar to the implementation 
    of exact rational arithmetic for Frank-Wolfe type algorithms by \citeauthor{BCP:2021}~\cite{BCP:2021}.
\end{remark}

Using \Cref{thm:conv}, we obtain at least a set containing the FM polytrope \(\overline{P}\), which is still is a compact convex set.

Also, at any given point, the objective function \(c\) is given by a sum of squares of the form 
\((x_i - x_j - c)^2\). This means that the gradient of \(c\) will be a linear combination of vectors
\(e_i - e_j\). For this reason, it suffices to minimize \(c\) along the \((e_i - e_j)\)-directions.  Using these observations, we may employ a modified reduced gradient algorithm
as in \Cref{alg:greedy-frechet} to calculate an initial tropical Fréchet mean. The convergence rate of reduced gradient algorithms is known to be sublinear \cite{DUNN1978432}.

\section{A Type Decomposition for the Objective Function}\label{sec:covdec}
While it is possible to formulate a quadratic program for the tropical Fr\'{e}chet mean, quadratic solvers usually do not support the computation
of exact solutions while simultaneously allowing quadratic constraints.   Currently available software is adequate for a numerical solution, but a different approach is required to obtain a truly exact solution. Thus, we need to break down the problem further, which we study in this section.

Note that the objective function \(c\) a tropical Fr\'echet mean minimizes can be expressed as \[
  c(x) 
  = \sum_{\nu=1}^m \dtr^2\left(x, p^{(\nu)}\right) 
  = \sum_{\nu=1}^m \max_{1\leq i,j\leq d} \left\{\, x_i - x_j - p_{ij}^{(\nu)} \,\right\}^2
  = \max_{i,j\in[n]^m}\sum_{\nu=1}^m \left(x_{i^{(\nu)}} - x_{j^{(\nu)}} - p_{ij}^{(\nu)}\right)^2.
\] This is a piecewise sum-of-squares which consists of at most \(\binom{n}{2}^m\) pieces defined over at most \((2^n\cdot n)^m\) regions.
This estimate is obtained from counting every possible expression of $c$ as a sum of $m$ squares. For a specific choice of sum of squares, we obtain several systems of linear inequalities by making a choice of sign for the absolute value
\(\lvert x_i - x_j\rvert\) for each \(1\leq i\neq j\leq n\).
In reality, the number of regions is far lower, which is a consequence of the sign data. We demonstrate this fact with the
following running example.

\begin{example}
    For \(x\in\torus[2]\), the squared distance \(\dtr^2(x,0)\) to the origin is of the form \[
        \dtr^2(x,0) = \max\left\{\,
            (x_2 - x_1)^2,\, (x_3 - x_1)^2,\, (x_3 - x_2)^2
        \,\right\}.
    \] For a specific square \((x_i - x_j)^2\) to realize the value of \(\dtr^2(x,0)\), we require \[
        (x_i - x_j)^2 > (x_k - x_\ell)^2
    \] for \(1\leq i,j,k,\ell\leq n\) where \(\{\, i,j\,\} \neq \{\, k,\ell\,\}\). This is the case when 
    the absolute values satisfy \[
        \lvert x_i - x_j\rvert > \lvert x_k - x_\ell\rvert.
    \] Since the absolute value is a piecewise linear function, we obtain \(8\) different sets of linear inequalities
    corresponding to each possible sign vector \((s_{21},s_{31},s_{32})\in\{\,\pm1\,\}^3\). Together with the choice
    of square $(x_i - x_j)^2$ realizing $\dtr^2(x,0)$, we have $3\cdot 8 = 24$ systems of linear inequalities to check.
    Yet \Cref{fig:braid-arrangement} shows that there are only 6 distinct regions corresponding to sign vectors.
\end{example}
Proceeding as in the above example, however, is an inefficient way to obtain all pieces of \(c\). In fact,
not all sign vectors result in non-empty regions of \(\torus\). The reason behind this is the 
\emph{braid arrangement}, which plays a role in the characterization of polytropes and also appeared in
the bi-tropical covector decompositions for tropical Fermat--Weber points \cite{SBYM:2024}.

\begin{definition}
  The \emph{braid arrangement} \(\braidarr\) in \(\torus\) is the classical hyperplane arrangement given by hyperplanes of the form \(
    x_i - x_j
  \) for \(1\leq j < i \leq n\). 
    The \emph{braid arrangement \(\braidarr(p)\) at \(p\in\torus\)} is the braid arrangement where the 
  origin $0\in\torus$ has been translated to \(p\).
  The braid arrangement \(\braidarr(S)\) of a finite set $S$ is the union of the braid arrangements at $p\in S$.
\end{definition}

\begin{figure}
  {\small\includestandalone[width=.6\linewidth]{figures/braid-arrangement}}
  \caption{Braid arrangement in the tropical affine plane \(\torus[2]\). The chambers are marked with their covectors
           and types.}
  \label{fig:braid-arrangement}
\end{figure}

We are interested in the chambers of \(\braidarr(S)\), that is, the regions in the complement
of \(\braidarr(S)\).
Each hyperplane $x_i - x_j$ partitions the space into half-spaces where \[
  x_i - x_j > 0 \quad\text{and}\quad x_i - x_j < 0.
\] Thus, each chamber of the braid arrangement corresponds to a choice of sign of \(\lvert x_i - x_j\rvert\)
for each \(1\leq j<i\leq n\). Such a choice of signs \((s_{ij})\in\{\pm\}^N\) is referred to as a \emph{covector}.
\begin{example}
    The braid arrangement in the tropical affine plane \(\torus[2]\) is shown in \Cref{fig:braid-arrangement}.
    It consists of the three classical hyperplanes \[
      x_2 - x_1,\ x_3 - x_1,\text{ and } x_3 - x_2.
    \] This hyperplane arrangement subdivides \(\torus[2]\) into six chambers, each of which corresponds to
    a choice of linear ordering on \(\{1,2,3\}\). We can represent such an ordering by a permutation \(\sigma\in\Sym_3\),
    in which case such an ordering gives rise to a covector \((s_{21},s_{31},s_{32})\). 
    Each chamber in \Cref{fig:braid-arrangement} is marked with the corresponding covector.
    In addition, we have marked each chamber with the expression realizing \(\dtr^2\). The antipodal chambers
    yield the same type of expression since the inequalities only differ by multiplication with $-1$.
\end{example}

\begin{definition}
  Let \(x,p\in\torus\) and suppose \(x\) is in the complement of the braid arrangement at \(p\). 
  We define the \emph{type of \(x\) with respect to \(p\)} as the unordered pair \(\{\,i,j\,\}\in\binom{[n]}2\)
  such that \[
    \dtr^2(x,p) = (x_i - x_j - (p_i - p_j))^2
  \] and denote this as \(\mathrm{type}(x,p)\).
  For a finite set \(S\) of points in \(\torus\) we define the type of \(x\) with respect to $S$ as \[
    \mathrm{type}(x,S) = \{\,
      \mathrm{type}(x,p) \mid p\in S
    \,\}.
  \]
\end{definition}
Stanley's characterization of the face poset for the braid arrangement \cite{Stanley:1986}
shows that each chamber corresponds to a linear ordering on \([n]= \{1,2,\dots,n\}\). 
In particular, the type is already determined by the choice of signs for \(x_i - x_j\). 

\begin{remark}
    For $n\leq 4$, the types of \(x\) are determined by a coarsening of the braid arrangement as we only need
    the information of which absolute value $\lvert x_i - x_j\rvert$ is maximal. Thus, the type of \(x\)
    depends only on a choice of a minimal element \(j\) and maximal element \(i\) in a partial ordering on \([n]\).

    This is similar to the situation for tropical Fermat--Weber points where \citeauthor{SBYM:2024} observed that
    a refinement of the braid arrangement determines the type of distance function \cite{SBYM:2024}.
    In contrast to their setting, squaring the tropical distances introduces additional symmetries. As a consequence, the objective function \(c\) assumes a specific type on several disjoint convex regions.
\end{remark}

We check that the type is well-defined across the chambers of the braid arrangement, starting with the type of two points with respect to the origin $0\in\torus$.
\begin{lemma}\label{lem:single-braid}
  Let \(x,y,p\in\torus\). If \(x\) and \(y\) are in the same chamber of the braid arrangement at $p$,
  then \(\mathrm{type}(x,p) = \mathrm{type}(y,p)\).

  \begin{proof}
    Since we obtain the braid arrangement at \(p\) by translation, it suffices to check this
    for the braid arrangement \(\braidarr\) (at 0).
  
    The chambers of \(\braidarr\) correspond to linear orders on \([n]\). 
    Thus, \(x\) and \(y\) being in the same chamber mean \[
      x_{\sigma(1)} < x_{\sigma(2)} < \dots < x_{\sigma(n)},\quad\text{and}\quad
      y_{\sigma(1)} < y_{\sigma(2)} < \dots < y_{\sigma(n)}
    \] for some permutation \(\sigma\in\Sym_n\). By forming all possible differences and and comparing 
    accordingly, we get the chain of inequalities \[
      x_{\sigma(n)} - x_{\sigma(1)} 
      \geq x_{\sigma(n)} - x_{\sigma(2)} 
      \geq\dots
      \geq x_{\sigma(n-1)} - x_{\sigma(1)}
      \geq\dots
      \geq x_{\sigma(2)} - x_{\sigma(1)}
      \geq 0.
    \] These inequalities are preserved by squaring, which means that 
    \(\mathrm{type}(x,p) = \mathrm{type}(y,p) = \{\,\sigma(1),\sigma(n)\,\}\).
  \end{proof}
\end{lemma}

\begin{proposition}\label{prop:all-braids}
  Let \(S\subset\torus\) be a finite set and \(x\) and \(y\) be points in the complement of the braid arrangement
  of $S$. Then, \(\mathrm{type}(x,S) = \mathrm{type}(y,S)\) if $x$ and $y$ are in the same chamber of the braid arrangement of $S$.

  \begin{proof}
      Each chamber in the braid arrangement of $S$ arises as the intersection of chambers in the braid arrangements
      at each $p\in S$. Since $x$ and $y$ are in the same chamber of the braid arrangement of $S$,
      we have \(\mathrm{type}(x,p) = \mathrm{type}(y,p)\) for each $p\in S$ by \Cref{lem:single-braid},
      so \(\mathrm{type}(x,S) = \mathrm{type}(y,S)\).
  \end{proof}
\end{proposition}

\begin{figure}
    \centering
    \includestandalone[width=0.7\linewidth]{figures/covector-decomposition}
    \caption{Type decomposition for the four points from \Cref{fig:intersect2}. The shaded chambers all share
             one type of objective function $c$. The corresponding critical locus is the dotted line.}
    \label{fig:covdec}
\end{figure}

\begin{example}\label{ex:pw-braid-fm}
    Consider the four points $S = \{\, (0,0,8),\, (0,2,4),\, (0,5,3),\, (0,10,2) \,\}$ from \Cref{fig:intersect2}.
    The covector decomposition of $S$ is shown in \Cref{fig:covdec}. The tropical Fr\'echet mean lies in the
    deeply shaded chamber. Here, the objective function $c$ is given by the sum of squares \[
        c(x) = (x_3 - x_2 - 8) ^2 + (x_3 - x_2 - 2)^2 + (x_3 - x_2 + 2)^2 + (x_3 - x_2 + 8)^2.
    \] After calculating the intersection of the corresponding chambers and simplifying the facet description,
    we obtain that this chamber is given by \[
        2 \leq x_2 - x_1 \leq 5 \quad\text{and}\quad 3\leq x_3 - x_1 \leq 4.
    \] Since the FM polytrope lies inside this chamber, it can be obtained by calculating the critical
    points of $c$ for this chamber using the partial derivatives \[
        \frac{\partial c}{\partial x_1} = 0,\quad
        \frac{\partial c}{\partial x_2} = 8x_2 - 8x_3,\quad\text{and}\quad
        \frac{\partial c}{\partial x_2} = 8x_3 - 8x_2.
    \] From this, we can see that the critical locus of $c$ is precisely the line given by $x_3 - x_2$
    whose intersection with the dark shaded chamber gives the FM polytrope.

    There are additional chambers which share the same type of objective function (lightly shaded), but they
    do not intersect the critical locus.
\end{example}

Having the critical locus inside of the corresponding chamber is the straightforward case, since we
only need to calculate the intersection of an affine subspace with a polyhedron. It is then immediate that
this intersection yields the FM polytrope.

If the critical locus lies outside of the corresponding chamber, the FM polytrope arises as the boundary optimum
of this chamber. As a word of caution, this boundary optimum cannot be obtained as the nearest point projection
onto the chamber, as we demonstrate in the next example.

\begin{figure}
    \centering
    \includestandalone[width=0.7\linewidth]{figures/covector-decomposition-2}
    \caption{Type decomposition for the three points from \Cref{fig:intersect}.}
    \label{fig:covdec2}
\end{figure}

\begin{example}
    The three points $P = \{\, (-3,0,0),\, (0,-6,0),\, (0,0,-12) \,\}$ from \Cref{ex:quadratic-optimisation}
    have the unique tropical Fr\'echet mean $\tfm = (0,0,-1)$. \Cref{fig:covdec2} shows that
    \(\tfm\) lies on the boundary of two chambers. We focus on the computations for the left chamber as 
    the computations for the right chamber are carried out analogously.

    On the right chamber, the objective function is given by the expression \[
        c(x) = (x_3 - x_1 - 3)^2 + (x_3 - x_2 - 6)^2 + (x_3 - x_1 + 12)^2
    \] with partial derivatives \[
        \frac{\partial c}{\partial x_1} =  4x_1        - 4x_3 - 18,\quad
        \frac{\partial c}{\partial x_2} =         2x_2 - 2x_3 + 12,\quad\text{and}\quad
        \frac{\partial c}{\partial x_3} = -4x_1 - 2x_2 + 6x_3 +  6.
    \] The linear system of equations given by the vanishing of these partial derivatives has the unique solution
    \(x' = (0,-\frac{21}{2},-\frac92)\). This lies outside the chamber which is given by the linear inequalities \[
        0 \leq x_2 - x_1 \leq 3,\quad
               x_3 - x_1 \leq 0
        \quad\text{and}\quad
      -12 \leq x_3 - x_2 .
    \] Notably, the actual tropical Fr\'echet mean $\tfm$ lies on the boundary of this chamber,
    but it is not the unique nearest point to the critical point.
    For example, $(0,0,-\frac92)$ is also contained in this chamber and \[
        \dtr^2\left(\left(0,0,-\frac92\right),\left(0,-\frac{21}2,-\frac92\right)\right) = \left(\frac{21}{2}\right)^2 = \dtr^2(\tfm,(0,6,3)).
    \] To obtain the boundary optimum of $c$ on this chamber, we can consider the Lagrangian \[
        \mathcal{L}(x,\lambda) = c(x) + \lambda\cdot(x_2 - x_1)
    \] to find the critical points of $c$ along the hyperplane $x_2 - x_1$. In this case, we
    consider the partial derivatives \[
        \frac{\partial\mathcal{L}}{\partial x_1} = \frac{\partial c}{\partial x_1} - \lambda,\ 
        \frac{\partial\mathcal{L}}{\partial x_2} = \frac{\partial c}{\partial x_2} + \lambda,\ 
        \frac{\partial\mathcal{L}}{\partial x_3} = \frac{\partial c}{\partial x_3},\ \text{and}\ 
        \frac{\partial\mathcal{L}}{\partial \lambda} = x_2 - x_1. 
    \] These are still linear equations which now have the solution $(x,\lambda) = (0,0,-1,-14)$,
    meaning that we have successfully recovered the tropical Fr\'echet mean.
\end{example}

In fact, the critical loci are always intersections of affine hyperplanes since we consider sums of squares.
This suggests the following procedure to calculate the FM polytrope exactly.

For the following discussion, we first fix some notation. We denote by $A_n\in\R^{N\times n}$ the matrix whose rows are differences $e_i - e_j$
of the standard basis vectors for $1\leq j<i\leq n$. Thus, $N = \binom{n}2$.
By abuse of notation, we call $s\in\{\,\pm\,\}^{m\times N}$ a covector for the point set $P$.
We denote by $\mathcal{C}(s)$ the chamber corresponding to a covector $s$.
A point \(x\) from the interior of the corresponding chamber gives rise to a type $\mathrm{type}(x,P)$.

\begin{algorithm}
  \caption{Computing the FM polytrope using the braid arrangement}
    \label{alg:frechet-braid}
    
    \begin{algorithmic}[1]
    \Procedure{BraidChambersFr\'echetMean}{}
    \State \textbf{Input:} data $P = \{p^{(1)}, \dots, p^{(m)}\} \subset \torus$
    \State \textbf{Output:} Fréchet mean polytope $\overline{P}$ 
    \For{every covector \(s\in\{\pm\}^{m\times N}\) of \(\braidarr(P)\)} \Comment{$2^{m\cdot N}$ iterations}
    \State Compute $\mathrm{type}(x,P)$ for generic $x$ with covector $s$ 
    \Comment{$\bigO(mn^3)$ by \Cref{lem:covector-to-type-algo}}
    \State Compute the critical locus of $c$ given $s$
    \Comment{$\bigO(n^3)$}
    \If {critical locus intersects \(\mathcal{C}(s)\)} \Comment{$\bigO(2^n)$}
    \State \(\overline{P}_s\leftarrow\) intersection of the critical locus with \(\mathcal{C}(s)\)
    \Else
    \State \(\overline{P}_s\leftarrow\) locus of the boundary optimum for \(\mathcal{C}(s)\)
    \Comment{$\bigO(n^5)$ by \Cref{lem:polytrope-lagrangian}}
    \EndIf
    \EndFor
    \State \textbf{Return} $\overline{P}_s$ maximizing $c$
    \EndProcedure
    \end{algorithmic}
\end{algorithm}

In the worst case, steps 5-8 are repeated $2^{m\cdot N}$ times which can be improved by only considering feasible covectors
and considering the symmetry in the sign choices. 
It is possible to take only covectors with positive first entry, reducing the number of iterations by a factor 
of $2^N$. Since one would then need to consider all chambers corresponding to an orbit in step $7$,
this does not improve the overall complexity of \Cref{alg:frechet-braid}

\begin{lemma}\label{lem:covector-to-type-algo}
    Let \(P\subset\torus\) be a set of points and $x$ be a point with covector $s\in\{\pm\}^{m\times N}$.
    Given $s$, \(\mathrm{type}(x,P)\) can be calculated in $\bigO(mn^3)$ steps.

    \begin{proof}
      Computing the $\mathrm{type}(x,P)$ means that we need to compute $\mathrm{type}(x,p^{(\nu)})$ for each $p^{(\nu)}\in P$.
      We can form a matrix $A_n\in\{\,-1,0,1\,\}^{N\times n}$ whose rows are the differences of the basis vectors
      \(e_i - e_j\) for \(1\leq j<i\leq n\). If we denote by $s^{(\nu)}$ the covector of $x$ with respect to
      \(\braidarr(p^{(\nu)})\), we can multiply each row of $A_n$ by the corresponding sign $s_{ij}^{(\nu)}$.
      Analogously to the proof of \Cref{lem:single-braid}, there will be a column with only positive entries
      corresponding to $\sigma(n)$ and a column with only negative elements corresponding to $\sigma(1)$.

      Building up $A_n$ takes $\bigO(n^2)$ steps since $N = \binom{n}2$, and applying each covector 
      $s^{(\nu)}$ to $A_n$ takes $\bigO(n^3)$ steps. Likewise, we can find \(\sigma(n)\) and \(\sigma(1)\)
      by scanning each column once which again takes $\bigO(n^3)$ in total. Since we have to do this for
      all $m$ covectors $s^{(\nu)}$ in $s$ we need $\bigO(mn^3)$ steps to obtain \(\mathrm{type}(x,P)\).
    \end{proof}
\end{lemma}

To compute the critical locus of \(c\) of a given type \(\mathrm{type}(x,P)\), we only need to calculate 
the gradient of a quadratic polynomial in $n$ variables. These will in particular be linear polynomials, 
thus the critical locus is an affine subspace. Thus, we can obtain the critical locus by solving 
an affine-linear system of equations.
If this subspace intersects the corresponding chamber, we already have a candidate for the FM polytrope
assuming that the minimum of $c$ is indeed realized by the corresponding type.

If the critical locus \emph{does not} intersect the chamber then the actual critical points of $c$ over this
chamber lie on the boundary. To obtain these boundary critical points, we can use the method 
of Lagrange multipliers for the facets of the chamber which in our case introduce only additional linear equations.

Without any heuristic, we must set up a Lagrangian for every facet of a particular chamber.
Since each chamber in the braid arrangement is a polytrope, we know that there are at most \(\binom{n}{2}\) facets.
As the Lagrangian is of the form $\mathcal{L} = c+\lambda\cdot g$ where $g$ defines the hyperplane
supporting a facet, the critical locus is still given by a affine-linear system of equations.

\begin{lemma}\label{lem:polytrope-lagrangian}
    If $f\in\R[x_1,\dots,x_n]$ is the sum of squares of linear polynomials and $P\subset\torus$ is
    a polytrope given by inequalities, the minimum of $f$ over $P$ can be found in $\bigO(2^n)$ steps.

    \begin{proof}
        To compute the critical locus $Q$ of \(f\), we only need to calculate the gradient of a 
        quadratic polynomial in $n$ variables. These will in particular be linear polynomials, 
        thus the critical locus is an affine subspace. Thus, we can obtain the critical locus by solving 
        an affine-linear system of equations. This can be done in $\bigO(n^3)$ steps when using Gaussian elimination.

        The intersection of $P$ and $Q$ can be found using the simplex algorithm in exponential time 
        \cite{MR332165}.
        If the critical locus of $f$ lies outside $P$, we have to compute a boundary minimum for $f$
        instead. This can be done using the Lagrangian \[
          \mathcal{L} = c+\lambda\cdot g
        \] where $g$ defines the hyperplane supporting a facet. The critical locus of the Lagrangian is 
        still given by affine-linear equations. In the worst case, we have to solve such a system for every 
        facet of $P$, of which there are at most \(\binom{n}{2}\).
        This means that we have to carry out $\bigO(n^5)$ steps in the worst case.
    \end{proof}
\end{lemma}

Combining the complexity estimates for each step of \Cref{alg:frechet-braid}, we obtain 
the following result.
\begin{theorem}
  For \(S = \{\, p^{(1)}, \dots, p^{(m)} \,\}\subset\torus\), it takes \(\bigO(2^{mn^2+n}\cdot(mn^3))\) steps to compute the FM polytrope exactly.
\end{theorem}

\section{Numerical Experiments}

In this section, we discuss the implementation to compute tropical Fréchet means and Fréchet mean polytropes, and present results of numerical experiments.

This study deals with two computational problems: calculating a single Fréchet mean and calculating 
the FM polytrope from given data. The computation of the latter depends on the former, in other words, to calculate the FM polytrope, we require a single tropical Fréchet mean.  The single tropical Fréchet mean may be computed using \Cref{alg:greedy-frechet}, from which the FM polytrope may then be computed 
using the procedure outlined in \Cref{prop:fm-polytrope-compute}.  

The computation of the FM polytrope is carried out using exact arithmetic and its accuracy depends on that of the initial Fréchet mean. Our experiments focus on testing this connection and better understanding the robustness of the FM polytrope computation subject to various Fréchet mean initiations.

We employ the solvers \texttt{Clarabel.jl} \cite{Clarabel_2024} and \texttt{Ipopt} \cite{Ipopt},
for which there exists a Julia binding supporting \texttt{MathOptInterface}.

\begin{example}
    Consider the three points \begin{small}\[
        P = \left\{\,
            \left(\frac{1}{5}, \frac{2}{5}, 2, \frac{2}{5},2,2 \right),\, 
            \left(2, 2, 2, \frac{2}{5}, \frac{2}{5}, \frac{1}{5} \right) ,\,
            \left(\frac{2}{5}, \frac{2}{5}, 2, \frac{1}{5},2,2 \right) 
        \,\right\}.
    \]\end{small}

    Calculating a tropical Fréchet mean numerically with \texttt{Clarabel} gives the following candidate \[
        x_1 \approx (-0.773, -0.773, 0.034, -1.048, -0.172, 0.034),
    \] which attains a sum of squared tropical distances of \[
        c(x_1) \approx 7.2800000000839\dots > 7.28.
    \]
    Calculating the FM polytrope \(\overline{P}\) improves on this and instead gives points which exactly 
    attain the sum of squared tropical distances \(\frac{182}{25} = 7.28\).

    We can moreover certify that the vertices of \(\overline{P}\) are indeed tropical Fréchet means using the approach
    from \Cref{thm:conv}. The corresponding lower bounds on the tropical distances are given by 
    \[
        \dtr^{2}(x,p_{1})\ge \left(x_1 - x_3 + \frac{9}{5}\right)^2,\ 
        \dtr^{2}(x,p_{2})\ge \left(x_2 - x_5 - \frac{8}{5}\right)^2,\ 
        \dtr^{2}(x,p_{3})\ge \left(x_4 - x_6 - \frac{9}{5}\right)^2.
    \]

    Checking for \(\bar{x} = \left( -\frac{3}{5}, -\frac35, 0, -\frac45, 0, 0\right)\) gives the following lower bound 
    for the sum of squares \[
    	\begin{split}
    		\sum_{i=1}^{3}{\dtr^{2}(x,p_{i})}
                &\ge \left(x_1 - x_3 + \frac{9}{5}\right)^2 + \left(x_2 - x_5 - \frac{8}{5}\right)^2 
                    + \left(x_4 - x_6 - \frac{9}{5}\right)^2 \\
                &= x_1^2 - 2 x_1x_3 + \frac{18}{5} x_1 + x_2^2 - 2 x_2x_5 - \frac{16}{5} x_2 + x_3^2 
                    -\frac{18}{5} x_3 + x_4^2 - 2 x_4x_6 \\
                &\qquad+ \frac{18}{5} x_4 + x_5^2 + \frac{16}{5} x_5 + x_6^2  -\frac{18}{5} x_6 + \frac{226}{25} \\
                &= \left(x_1 - x_3 + \frac35\right)^2 + \left(x_2 - x_3 + \frac35\right)^2
                    +\left(x_4 + x_3 + \frac45\right)^2 \\
                &\qquad + 2\left(x_4 - x_5 + \frac85\right)^2 + \frac{182}{25} \ge \frac{182}{25}. \qedhere
    	\end{split}
	\] 
\end{example}

We computed several examples like the above by hand and then employed the aforementioned solvers to obtain a candidate for a tropical Fréchet mean and 
compare it to the manually calculated solution. We compared the performance of optimization
according to \Cref{sec:exact-quadratic} to the numerical computation using \Cref{alg:greedy-frechet}.
In our experiments we found that in general the numerically obtained Fréchet mean is very close to the 
actual tropical Fréchet mean, up to an error of order $10^{-5}$.

We also ran experiments for randomly generated data. We generated 10 samples each for 
dimensions \(n\in\{5,10,15,20\}\) and sample sizes \(m\in\{n, 2n, 3n\}\).
The experiments were run on a MacBook Pro with an 2,4 GHz Apple M2 Max. Each computation was allotted a maximum of
\num{16}\unit{\giga\byte} of memory.
The average runtime for each dimension and sample size is shown in \Cref{fig:timings}.

\begin{figure}[b]
    \centering
    \includestandalone[width=.5\linewidth]{figures/timings}
    \caption{Average timings of calculating tropical Fréchet means for random samples
    in dimension \(n\in\{5,10,15,20\}\) and sample sizes \(m\in\{n, 2n, 3n\}\).}
    \label{fig:timings}
\end{figure}

\subsection*{Software Availability}

Our implementation in \texttt{Julia} for computing tropical Fré\-chet means is freely available at 
\begin{center}
\url{https://zenodo.org/records/14811067}
\end{center}

%%%%%%%%%%%%%%%%%%%%%%%%%%%%%%%%%%%%%%%%%%%%%%%%%%%

\section{Discussion}
\label{sec:discussion}

In this paper, we studied the optimization problem of computing Fréchet means under the tropical metric in the tropical projective torus.  Tropical Fréchet means are in general not unique; we give a characterization of the set of all tropical Fréchet means as a classical and tropical convex polytope, and prove existence of a nonnegative certificate for the computation of tropical Fréchet means.  We studied the exact computation of tropical Fréchet means and the tropical Fréchet means polytrope; we proposed and implemented an algorithm to compute tropical Fréchet means and tested it numerically.  We now discuss several future research directions.

We have seen that the behavior of the FM polytrope for a given sample is widely varying, ranging from
unique Fréchet means to polytropes of different dimensions. As the intersection of boundaries of
tropical balls, it is immediate that the FM polytrope must have codimension at least one.
Beyond this observation, it is not clear how to predict the dimension of the FM polytrope.
Thus, criteria for the expected dimension of an FM polytrope, or even uniqueness of tropical Fréchet means is an interesting question for further study.

With a view towards statistical applications, it is also reasonable to consider tropical Fréchet means
up to an error or \(\varepsilon\). In this setting, the constants provided by \Cref{thm:frechet} become intervals,
and the FM polytrope is obtained as the intersection of \(\varepsilon\)-thickenings of tropical balls.
\citeauthor{JS:2022} \cite{JS:2022} studied a parameterized version of polytopes and the question of finding
the feasible set of weights for a given parameterization such that the resulting polytrope is nonempty.  Our work serves as a possible starting point for computing a FM polytrope up to an error and computing tropical Fréchet means from real data.

The classical definition of Fr\'echet means involves the minimization of a sum of squared distances.
This may also be generalized to sums of arbitrary positive powers---that is, finding points minimizing the 
objective function \[
    \sum_{\nu=1}^{m}{\dtr^q(\tfm,p^{(\nu)})},
\] leading to a notion of \emph{tropical \(q\)-Fr\'echet means}. For \(q=1\), this recovers the tropical Fermat--Weber problem
as has been observed by \citeauthor{Comǎneci:2024}~\cite{Comǎneci:2024} and previously studied by \citeauthor{Lin.Yoshida:2018} \cite{Lin.Yoshida:2018}. A natural question then follows on the properties
of tropical \(q\)-Fr\'echet means for \(q\neq 1,2\) and how existing results on tropical Fermat--Weber points
and those established in this paper on tropical Fréchet means may be generalized to the arbitrary $q$ setting. 
The cases \(q=1,2\) constitute tropically convex location problems in the sense of \citeauthor{Comǎneci:2024} \cite{Comǎneci:2024},
which then opens the question of whether the remaining cases also exhibit tropical convexity in the same sense and whether they may be studied in the same framework.

We have also presented a local method that breaks down the optimization problem for tropical Fr\'echet means into a large number
of small linear problems. While combinatorially demanding, this method is amenable for computer algebra systems which are capable of
producing exact solutions. This also serves as a starting point of a connection between tropical geometry
and \emph{metric algebraic geometry} \cite{BKS:2024}. A possible direction for future work involves the study of conditions on the sample
points such that the FM polytrope is positive dimensional.
As suggested by \Cref{ex:pw-braid-fm}, this problem is connected to gradients of pieces of the objective function
vanishing identically zero.

In summary, this paper shows that the tropical Fréchet mean problem admits a fully symbolic and polyhedral formulation, in which both the geometry of the solution set and the structure of the objective function can be described combinatorially.  Our work places tropical Fréchet means within the broader context of exact quadratic optimization problems on polyhedral spaces and suggests a general paradigm for treating metric optimization problems in tropical and other piecewise-linear geometries using symbolic methods.

\section*{Acknowledgments}
We would like to extend special thanks to Ariff Jazlan Johan for contributing to \Cref{alg:greedy-frechet} and helping with implementations.  
We thank Marco Botte, Stephan Huckemann, Beatrice Matteo, and Roan Talbut for helpful discussions.
We also thank anonymous referees for their positive feedback and comments.

We would like to thank the Centro de Investigaci\'on en Matem\'aticas and Adri\'an P\'erez Valencia
for their hospitality during the International Symposium on Symbolic and Algebraic Computation 2025 which took
place in Guanajuato, M\'exico.

C.\,A.\ and K.\,F.\ acknowledge funding by the Deutsche Forschungsgemeinschaft (DFG, German Research
Foundation) under Germany´s Excellence Strategy – The Berlin Mathematics
Research Center MATH+ (EXC-2046/1, EXC-2046/2, project ID: 390685689). A.\,M.\ is supported by the UK Research and Innovation: Engineering and Physical Sciences Research Council under grant reference [EP/Y028872/1].

%%%%%%%%%%%%%%%%%%%%%%%%%%%%%%%%%%%%%%%%%%%%%%%%%%%
\printbibliography

%%%%%%%%%%%%%%%%%%%%%%%%%%%%%%%%%%%%%%%%%%%%%%%%%%%
%%%%%%%%%%%%%%%%%%%%%%%%%%%%%%%%%%%%%%%%%%%%%%%%%%%

\end{document}